\newtheorem{theorem}{Theorem}[section]
\newtheorem{lemma}{Lemma}[section]
\newtheorem{proposition}{Proposition}[section]
\newtheorem{definition}{Definition}[section]
\newtheorem{claim}{Claim}[section]
\newtheorem{fact}{Fact}[section]
\newtheorem{remark}{Remark}[section]
\newtheorem{example}{Example}[section]
\newtheorem{corollary}{Corollary}[section]
\newtheorem{question}{Question}[section]
\newtheorem{problem}{Problem}[section]
\numberwithin{equation}{section}
\begin{document}

\title{ On tunnel  numbers of  a cable knot and its companion}

\author{Junhua Wang}

\address{School of Mathematics and Physics, Jiangsu University of Technology, Changzhou 213001, P. R. China}

\curraddr{}
\email{jhwang@jsut.edu.cn; wangjunhua1000@163.com}

%\thanks{The first author was supported in part by NSF Grant \#000000.}

\author{Yanqing Zou}
\address{School of Mathematical Sciences \& Shanghai  Key Laboratory of PMMP,  East China Normal University}
\email{yanqing@dlnu.edu.cn; yanqing\_dut@163.com}
\thanks{This work was partially supported by NSFC No.S 11601065, 11571110, 11726609, 11726610 and Science and Technology Commission of Shanghai Municipality (STCSM), grant No. 18dz2271000. We thank Qilong Guo for his  suggestion that   we should give out a way to construct a satellite knot and its companion with arbitrary large difference between their tunnel numbers.}

\subjclass[2010]{57M27}

%\date{January 1, 2001 and, in revised form, June 22, 2001.}

%\dedicatory{This paper is dedicated to our advisors.}

\keywords{cable knot, tunnel number, Heegaard distance}

\begin{abstract}
Let $K$ be a nontrivial knot in $S^{3}$ and $t(K)$ its tunnel number.  For any  $(p\geq 2,q)$-slope in  the torus boundary of  a closed regular neighborhood of $ K$ in $S^{3}$,
  denoted by $K^{\star}$, it is a nontrivial cable knot in $S^{3}$.  Though $t(K^{\star})\leq t(K)+1$,  Example \ref{example1.1} in Section 1 shows that
in some case,  $ t(K^{\star})\leq t(K)$. So it is interesting to know when $t(K^{\star})= t(K)+1$.

After using  some combinatorial techniques,  we prove that (1) for any nontrivial cable knot $K^{\star}$ and its companion $K$,  $t(K^{\star})\geq t(K)$;
(2) if either $K$ admits a high distance Heegaard splitting or $p/q$ is far away from
a fixed subset in the  Farey graph,   then $t(K^{\star})= t(K)+1$. Using the second conclusion, we construct a satellite knot and its companion so that
the difference between their tunnel numbers is arbitrary large.

\end{abstract}

\maketitle

\section{Introduction}

Let $K$ be  a nontrivial knot in $S^{3}$ and $E(K)$ its closed complement in $S^{3}$.  Then $E(K)$ admits a Heegaard splitting $V\cup_{S}W$ with $\partial E(K)=\partial_{-}W$.
Let $g(K)$ be the minimal Heegaard genus of $E(K)$  and the tunnel number $t(K)=g(K)-1$.  For any pair of pairwise coprime numbers
$p$ and $q$,  there is a slope crossing the meridian  $p$ times and the longitude $q$ times, denoted by   $p/q$, in $\partial E(K)$.   Then it is a
$(p,q)$-cable knot over $K$, denoted by $K^{\star}$, and  $K$ is a companion of $K^{\star}$.  Though  $K$ is also a cable knot of itself,  we only consider the nontrivial case and require $p\geq 2$.  Since $K^{\star}$ is contained in the closed neighborhood of $K$, it is interesting to know the difference between $t(K)$ and $t(K^{\star})$.

There is a combinatorial description of  $E(K^{\star})$ through $E(K)$,  in which way it gives an inequality between $t(K^{\star})$ and $t(K)$.  Let $\eta(K)$ be the closed regular neighborhood of $K$ in $S^{3}$. Since $K^{\star}$ is a $p/q$ slope  in $T^{2}=\partial \eta(K)=\partial E(K)$,
we can slightly push $K^{\star}$ into the interior of $\eta(K)$.  It is not hard to see that $E(K^{\star})$ is homeomorphic to the amalgamation of $E(K)$ and $\eta(K)$ along an annulus
$A$ in their common torus boundary, where the core curve of $A$ is isotopic to the $p/q$ slope.  Then $E(K^{\star})$ admits a Heegaard splitting as follows:  Let $a$ be a fiber arc in $A\times I$ from $A\times \{0\}$ to $A\times \{1\}$ and $\eta(a)$ the closed regular neighborhood in $A\times I$.  Then we define $V^{\star}= V\cup \eta(a)\cup \eta(K)$ and $W^{\star}=\overline{E(K^{\star})-V^{\star}}$. Since $W^{\star}$ is homeomorphic to  the amalgamation of a handlebody and $(T^{2}-disk)\times I$ along an once punctured annulus,  it is a compression body, see Figure 1. Moreover, $\partial _{+} W^{\star}=\partial _{+}V^{\star}=S^{\star}$.  So $V^{\star}\cup _{S^{\star}} W^{\star}$ is a Heegaard splitting of $E(K^{\star})$ and $g(K^{\star})\leq g(S^{\star})=g(S)+1$.  Therefore $g(K^{\star})\leq g(K)+1$.  Hence   $t(K^{\star})\leq t(K)+1$, see also in \cite{Moa}. However,  $t(K)+1$ is not always the best upper bound of $t(K^{\star})$, see Example \ref{example1.1} as follows.
\begin{figure}[htbp]
	\centering
	\includegraphics[width=0.80\textwidth]{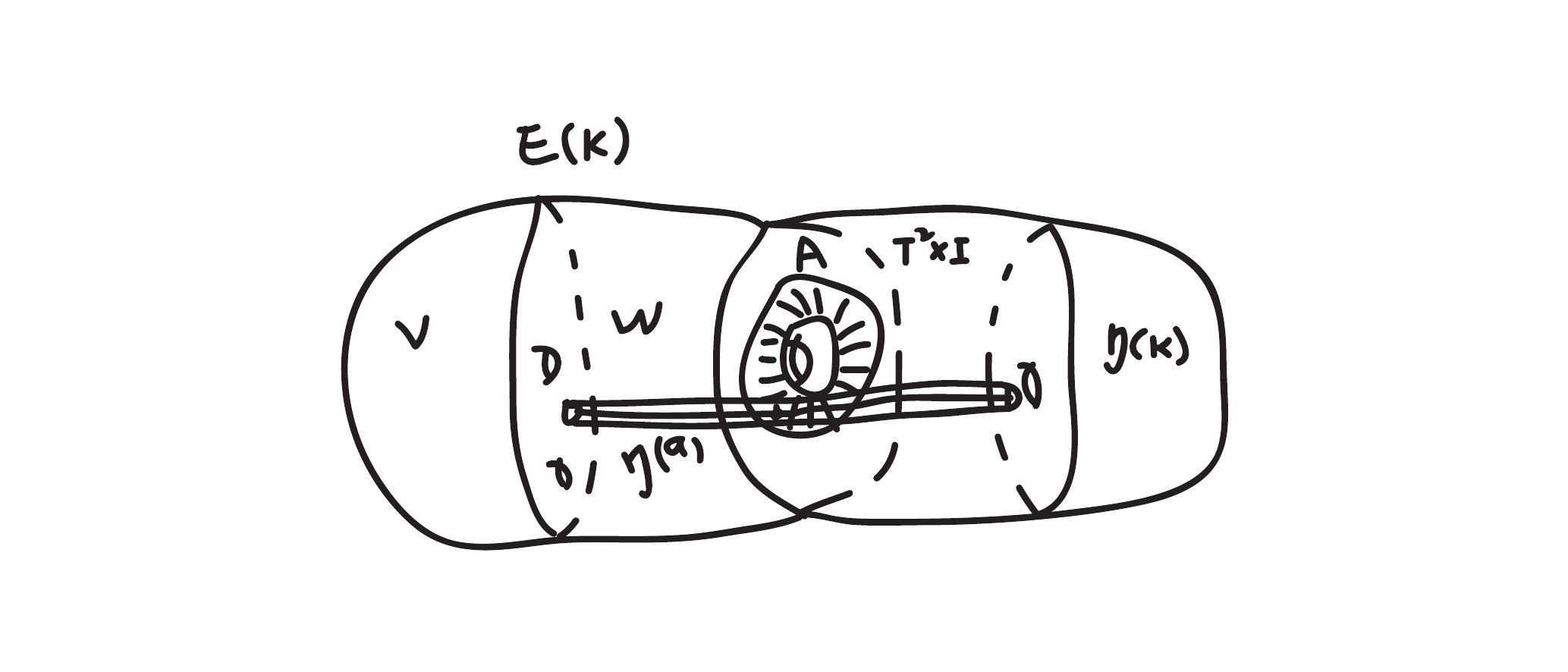}
	\caption{Construction of a Heegaard splitting $V^{\star}\cup _{S^{\star}} W^{\star}$ of $E(K^{\star})$}
\end{figure}

Before stating Example \ref{example1.1},  we introduce the definition of a $r$-primitive knot  $K$ in $S^{3}$. We assume that  the torus boundary $\partial E(K)\subset W$.  Let $r$ be a slope in $\partial E(K)$. If there is a spanning annulus $A_{1}\subset W$ with  $r\subset A$ and an essential disk $D_{1}\subset V$ such that $A_{1}\cap D_{1}$ is a single point, then  $V\cup_{S}W$ is called $r$-primitive. Moreover,  if $E(K)$ admits a $r$-primitive minimal Heegaard splitting, $K$ is called $r$-primitive.
\begin{example}
\label{example1.1}
Let $K$ be a $p/q$-primitive knot in $S^{3}$. Suppose $V\cup_{S}W$ is a $p/q$-primitive mininal Heegaard splitting of $E(K)$ with $\partial E(K)=\partial_{-}W$. Then there is a spanning annulus $A_{1}$ and an essential disk $D_{1}$ so that (1) $|\partial A_{1}\cap \partial D_{1}|=1$; (2) $\partial A_{1}\cap \partial_{-}W$ is a $p/q$ slope on $T^{2}$. In the above construction of the Heegaard splitting $V^{\star}\cup_{S^{\star}}W^{\star}$ of $E(K^{\star})=E(K)\cup_{A}\eta(K)$, let $a_{0}=a\cap W$ be the spanning arc of $A_{1}$ which is disjoint from the point $\partial A_{1}\cap \partial D_{1}$. Since $W^{\star}=\overline{W-\eta(a_{0})}\cup_{(A-disk)} [(T^{2}-disk)\times I]$, $\overline{[(\partial A_{1}-a_{0})\cap \partial_{-}W]\times I}\cup \overline{[(\partial A_{1}-a_{0})\cap \partial_{+}W]\times I}$ is an essential disk of $W^{\star}$, denoted by $D_{2}$. Note that $D_{1}$ is an essential disk in $V^{\star}$ and $|\partial D_{1}\cap \partial D_{2}|=1$. Then $V^{\star}\cup_{S^{\star}}W^{\star}$ is stabilized, see Figure 2. Hence $t(K^{\star})\leq t(K)$, see \cite{LQ}. %Suppose $(\partial A_{1}\cap T^{2})\times I$ is a spanning annulus in $T^{2}\times I$. Let $A_{1}^{'}=A_{1}\cup [(\partial A_{1}\cap T^{2})\times I]$. In the above construction of $V^{\star}\cup_{S^{\star}}W^{\star}$, if we choose the fiber arc $a$ as the spanning arc of $A_{1}^{'}$ which is disjoint from the point $\partial A_{1}\cap \partial D_{1}$, then $\overline{A_{1}^{'}-\eta(a)}$ is an essential disk in $W^{\star}$, denoted by $D_{2}$. Note that $D_{1}$ is an essential disk in $V^{\star}$. So $D_{1}$ and $D_{2}$ are a pair of stabilized disks of the Heegaard splitting $V^{\star}\cup_{S^{\star}}W^{\star}$, see Figure 2. Hence $t(K^{\star})\leq t(K)$, see \cite{LQ}.%In the above construction of $V^{\star}\cup_{S^{\star}}W^{\star}$, we can isotope  $A_{1}$ in $W$ such that one boundary of $A_{1}$ lies in $A$ and extend $A_{1}$ to $A_{1}^{'}$ through the $I$-bundle structure $T\times I$. The tube sum (?) Heegaard splitting is stabilized
\end{example}

\begin{figure}[htbp] \centering
	\subfigure[$|\partial D_{1}\cap \partial A_{1}|=1$]{\label {fig:a}
	\includegraphics[width=0.45\textwidth]{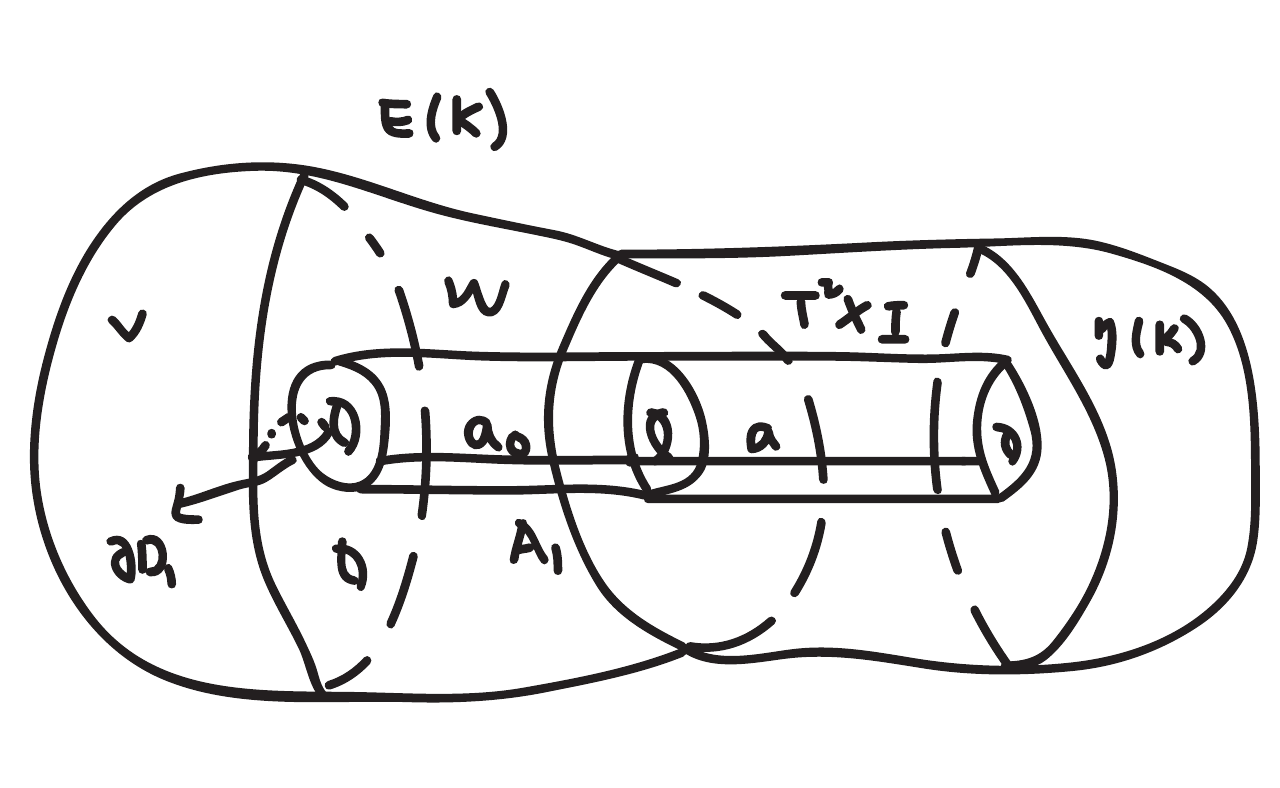}
		}
	\subfigure[$|\partial D_{1}\cap \partial D_{2}|=1$]{\label {fig:b}
	\includegraphics[width=0.45\textwidth]{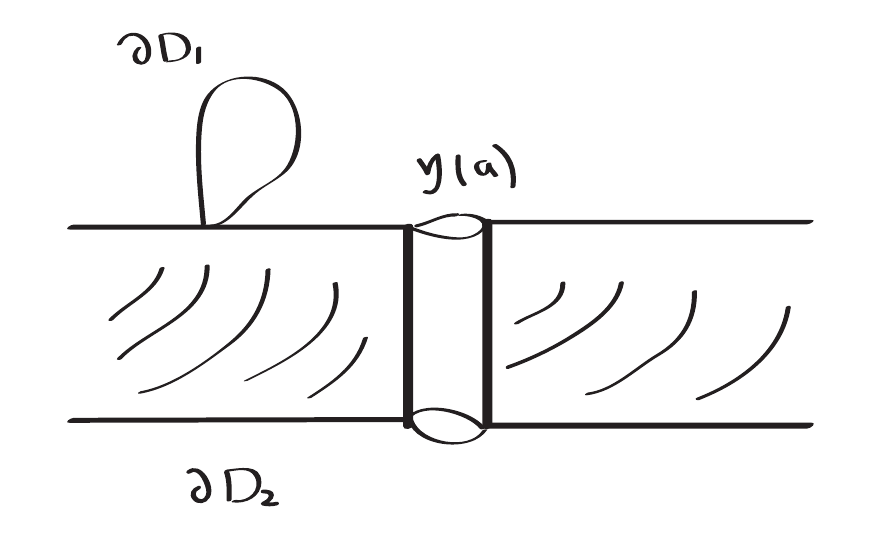}
	}
	\caption{$V^{\star}\cup_{S^{\star}} W^{\star}$ is stabilized}
\end{figure}
Then there is a natural question:  Can $\mid t(K^{\star})- t(K) \mid$   be arbitrarily large?   Unfortunately, the answer is negative.  More precisely, we have the following theorem.
\begin{theorem}
\label{theorem0}
For any nontrivial cable knot $K^{\star}\subset S^{3}$ and its companion $K\subset S^{3}$,  $t(K)\leq t(K^{\star})\leq t(K)+1$.
\end{theorem}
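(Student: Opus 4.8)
The upper bound $t(K^{\star})\leq t(K)+1$ is already furnished by the explicit Heegaard splitting $V^{\star}\cup_{S^{\star}}W^{\star}$ built in the introduction, so the entire content of the theorem lies in the lower bound $t(K^{\star})\geq t(K)$, i.e. $g(K^{\star})\geq g(K)$. The plan is to start from a \emph{minimal} genus Heegaard splitting $E(K^{\star})=V\cup_{S}W$ with $g(S)=g(K^{\star})$ and to manufacture from it a Heegaard splitting of $E(K)$ whose genus does not exceed $g(K^{\star})$. The geometric input is the decomposition recorded in the introduction: $E(K^{\star})=E(K)\cup_{A}\eta(K)$, where $\eta(K)$ is a solid torus and $A\subset T^{2}$ is an annulus whose core is the $p/q$ slope. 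Since $p\geq 2$, the annulus $A$ is essential (incompressible and not boundary parallel) in $E(K^{\star})$, and cutting $E(K^{\star})$ along $A$ returns precisely the disjoint union $E(K)\sqcup\eta(K)$.

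First I would put $S$ into good position with respect to $A$. Using that $E(K^{\star})$ is irreducible and $A$ is incompressible, innermost-disk and outermost-arc exchanges isotope $S$ so that every closed curve of $S\cap A$ is essential in $A$ (hence parallel to the $p/q$ core) and every arc of $S\cap A$ is essential in $A$; trivial circles and boundary-parallel arcs are removed one at a time. With $S\cap A$ in this normal form, the two surfaces $S_{0}=S\cap E(K)$ and $S_{1}=S\cap\eta(K)$ are properly embedded and inherit from $V$ and $W$ the structure of a pair of compression bodies glued along the sub-annuli of $A$.

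The next step exploits the fact that $\eta(K)$ is a solid torus, the simplest possible handlebody. The goal is to show that the portion $S_{1}$ of the splitting lying in $\eta(K)$ can be absorbed: after amalgamating the compression-body pieces on the $\eta(K)$ side and capping the arcs of $\partial S_{0}$ along the sub-annuli of $A$, the surface $S_{0}$ becomes a Heegaard surface $S'$ for $E(K)$. An Euler-characteristic bookkeeping, in which each annular piece of $A$ contributes nonnegatively and the genus lost by deleting $S_{1}$ is at least compensated, then yields $g(S')\leq g(S)$, and therefore $g(K)\leq g(S')\leq g(S)=g(K^{\star})$, which is the desired inequality $t(K)\leq t(K^{\star})$.

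The hard part will be the absorption step, that is, controlling the circle components of $S\cap A$ that run parallel to the $p/q$ core and guaranteeing that the reconstructed $S'$ is a genuine Heegaard surface --- each complementary piece an honest compression body --- rather than merely an incompressible surface bounding the right pieces. This is exactly an amalgamation argument for Heegaard splittings along the essential annulus $A$, and it must accommodate the possibility that the companion torus $T^{2}=A\cup(T^{2}\setminus A)$ compresses into $V$ or $W$; that degenerate configuration is precisely what allows $t(K^{\star})$ to drop to $t(K)$ in the primitive situation of Example \ref{example1.1}, so the bookkeeping cannot hope to gain more than the stated inequality.
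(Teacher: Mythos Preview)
Your outline correctly identifies the starting point --- take a minimal Heegaard surface $S$ for $E(K^{\star})$, intersect it with the essential annulus $A$, and try to extract a Heegaard splitting of $E(K)$ of no larger genus --- but the mechanism you propose for the ``absorption step'' is where the argument actually lives, and what you have written there is not yet a proof. A few concrete issues:

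First, a small point: $S$ is a closed surface and $A$ is a properly embedded annulus with $\partial A\subset\partial E(K^{\star})$, so $S\cap A$ consists only of closed curves, never arcs. Your outermost-arc discussion is vacuous.

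More seriously, ``amalgamation along $A$'' and ``Euler-characteristic bookkeeping'' are not enough to guarantee that the reconstructed surface $S'$ bounds compression bodies on both sides. The paper's proof does not proceed by amalgamation at all. Instead it first splits into the cases where the minimal splitting is strongly irreducible versus weakly reducible (handling the latter by untelescoping). In the strongly irreducible case one shows that exactly one component of $\overline{S-A}$ is strongly irreducible while all others are incompressible; the proof then bifurcates on whether this bicompressible piece sits in $E(K)$ or in the solid torus $J=\eta(K)$. When it sits in $J$, the key input you are missing is Scharlemann's classification of bicompressible, weakly incompressible surfaces in a solid torus (Proposition~3.2 of \cite{scharlemann98}): such a surface is a boundary-parallel annulus with a tube, or a tube sum of two such annuli. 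This structural result is what makes the ``absorption'' possible, and the actual operation is a \emph{Dehn surgery} on a core curve of a sub-solid-torus of $J$ that turns the nested annuli into product annuli, converting $V^{\star}\cup_{S^{\star}}W^{\star}$ into a genuine Heegaard splitting of $E(K)$ of the same (or smaller) genus. Without Scharlemann's lemma and the Dehn-surgery trick, there is no reason the pieces you glue together are compression bodies, and no Euler-characteristic count will supply that.
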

\begin{remark}
In a conference held in Xi'an Jiaotong University 2015, Tao Li announced that for any nontrivial satellite knot $K^{\star}\subset S^{3}$ and its companion $K\subset S^{3}$, $t(K^{\star})\geq t(K)$. But until now, we have found no preprint or publication containing this result.
\end{remark}

By Example \ref{example1.1}, if $K^{\star}$ is a (p,q)-cable knot while $K$ is $p/q$-primitive, then $t(K^{\star})\leq t(K)$.  Hence in this case, $t(K^{\star})=t(K)$. So there is  a problem as follows.

\begin{problem}
\label{problem1}
For any nontrivial cable knot $K^{\star}\subset S^{3}$ and its companion $K\subset S^{3}$,  $t(K^{\star})= t(K)$ if and only if $K^{\star}$ is a (p,q)-cable knot and $K$ is $p/q$-primitive.
\end{problem}

There are some results related to this problem. For example,  Moriah \cite{Moa} proved that if $K$ is tunnel number one but no torus knot, then $t(K^{\star})=t(K)+1$.  If we consider $E(K^{\star})$ as the amalgamation of $E(K)$ and $\eta(K)$ along an annulus,  it  is more or less similar to  the complement of  the connected sum of two knots in $S^{3}$. For any two high distance knots, Gao, Guo and Qiu \cite{GGQ} proved that $t(K_{1}\sharp K_{2})=t(K_{1})+t(K_{2})+1$.  Since a $p/q$-primitive Heegaard splitting has distance at most 2,   Example \ref{example1.1} doesn't hold on a high distance knot.  Thus we guess that if $K$ is a high distance knot,   $t(K^{\star})=t(K)+1$.

Although there are many well properties  of a high distance knot,  given a knot $K$ in $S^{3}$,  it is a little bit hard to determine whether $E(K)$ admits  a high distance Heegaard splitting or not.  So for an arbitrary nontrivial knot $K$, we  consider the problem that how to properly choose $K^{\star}$ so that $t(K^{\star})=t(K)+1$. In \cite{Li2},  Li introduced an idea to consider a sufficiently complicated gluing map in the calculation of minimal Heegaard genus of an amalgamated 3-manifold. As pointed earlier,  $E(K^{\star})$ is also an amalgamation of two 3-manifolds along an annulus. Then we guess that there should be a similar result on tunnel numbers between $K^{\star}$ and $K$ .  We present these two ideas in the following theorem.

\begin{theorem}\label{thm1}
Suppose $K^{\star}$ is a $(p\geq 2,q)$-cable knot over a nontrivial knot $K$ in $S^{3}$.
\begin{enumerate}%$\mathcal{I}$
	\item If $E(K)$ admits a distance at least $2t(K)+5$ Heegaard splitting, then $t(K^{\star})=t(K)+1$.
	\item Let $InS$ be the collection of boundary slopes of essential surfaces properly embedded in $E(K)$.  Then there is a constant number $\mathcal{N}$ depending on $K$ so that if
	\begin{equation*}
	diam_{\mathcal{C}(T^{2})}(p/q,InS)>\mathcal{N},
	\end{equation*} then $t(K^{\star})=t(K)+1$.
\end{enumerate}
\end{theorem}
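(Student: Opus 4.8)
The plan is to prove the lower bound $t(K^{\star})\geq t(K)+1$ in each case; together with the inequality $t(K^{\star})\leq t(K)+1$ established in the Introduction this gives the asserted equality, and since Theorem \ref{theorem0} already supplies $t(K^{\star})\geq t(K)$ it suffices to rule out a Heegaard splitting $V^{\star}\cup_{S^{\star}}W^{\star}$ of $E(K^{\star})$ with $g(S^{\star})=g(K)$. Arguing by contradiction from such a splitting, I would work with the cabling torus $T=\partial\eta(K)$, which is essential in $E(K^{\star})$ and splits it into the companion exterior $E(K)$ and the cable space $C=\overline{\eta(K)-\eta(K^{\star})}$, a Seifert fibered space of Heegaard genus $g(C)=2$ (here $p\geq 2$ is used). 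The common first step is to isotope $S^{\star}$ into efficient position with respect to $T$, minimizing $|S^{\star}\cap T|$, and then to apply the untelescoping and amalgamation machinery of Scharlemann--Thompson and Schultens. When $S^{\star}\cap T$ consists of essential curves and $T$ appears as a thin level, $S^{\star}$ is recovered as the amalgamation along $T$ of induced Heegaard splittings of $E(K)$ and $C$, giving
\begin{equation*}
g(S^{\star})\geq g_{1}+g_{C}-1\geq g_{1}+1,
\end{equation*}
where $g_{1}$ is the genus of the induced thick surface on the $E(K)$ side. A contradiction follows the moment $g_{1}\geq g(K)$ is forced; the only obstruction is the degeneration of Example \ref{example1.1}, where $S^{\star}$ is pushed off $T$ or compressed across it so that the $E(K)$ side never realizes a full-genus splitting. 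The role of each hypothesis is exactly to exclude this degeneration.

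For part (1) I would invoke the distance hypothesis through a Scharlemann--Tomova type comparison. The induced Heegaard surface on the $E(K)$ side has genus at most $g(S^{\star})=g(K)$, so if $E(K)$ carries a Heegaard splitting of distance at least $2t(K)+5$, then that high-distance surface cannot be a destabilization of the induced one, and the comparison theorem forces the induced surface to realize the high-distance splitting itself, whence $g_{1}=g(K)$. The additive slack in $2t(K)+5$ beyond the naive value $2g(K)=2t(K)+2$ is there to absorb the Euler-characteristic loss from the curves of $S^{\star}\cap T$, the two torus boundary components, and the handle rearrangements of the untelescoping. With $g_{1}=g(K)$ and $g_{C}=2$ the displayed inequality gives $g(S^{\star})\geq g(K)+1$, contradicting $g(S^{\star})=g(K)$.

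For part (2) I would follow Li's idea from \cite{Li2} of excluding the degeneration by a sufficiently complicated gluing. The constant $\mathcal{N}$ would be chosen in terms of $g(K)$ and the bounded complexity of the two pieces so that the following dichotomy holds in efficient position: either $S^{\star}\cap T$ is inessential or of bounded subsurface complexity, in which case compressing $S^{\star}$ across $T$ reduces it to an efficient amalgamation and again forces $g(S^{\star})\geq g(K)+1$; or the compressions of $S^{\star}$ on the $E(K)$ side produce an essential surface whose boundary slope lies in $InS$, and a subsurface-projection estimate bounds the distance in $\mathcal{C}(T^{2})$ between this slope and the cabling slope $p/q$ by $\mathcal{N}$, contradicting the hypothesis $diam_{\mathcal{C}(T^{2})}(p/q,InS)>\mathcal{N}$. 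In either case a genus-$g(K)$ splitting cannot exist.

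The main obstacle in both parts is the bookkeeping that controls $g_{1}$ and the slope of $S^{\star}\cap T$ under the efficient-position isotopy: one must verify that making $S^{\star}$ efficient with respect to $T$ does not dissipate so much genus that the companion's high-distance structure (part 1) or an essential surface with a definite boundary slope (part 2) fails to emerge, and in part (2) one must render $\mathcal{N}$ explicit by bounding quantitatively how far the slope of $S^{\star}\cap T$ can be driven from $p/q$ while $g(S^{\star})$ remains equal to $g(K)$.
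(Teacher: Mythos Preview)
Your architecture is right---split along the essential torus $T=\partial\eta(K)$, use $g(C)=2$, and aim for the amalgamation inequality $g(S^{\star})\geq g(K)+g(C)-1$---but the logical flow in part~(1) is inverted, and this is more than bookkeeping. You treat ``$T$ appears as a thin level'' as one case of a dichotomy and then invoke a Scharlemann--Tomova comparison to force $g_{1}=g(K)$. In the paper the order is the reverse: the distance hypothesis is spent \emph{entirely} on forcing $T$ to be a thin level, and once that is achieved the induced surface on the $E(K)$ side is automatically a genuine Heegaard surface of $E(K)$, so $g_{1}\geq g(K)$ holds for free---no Scharlemann--Tomova needed. Concretely, the paper first shows (Lemma~\ref{lemma1}) that a minimal splitting cannot be strongly irreducible: if it were, Schultens' lemma makes $S^{\star}\cap T$ essential, and then (Claim~\ref{claim1}) at most one component of $S^{\star}\cap E(K)$ is strongly irreducible while the rest are essential; feeding any such component into the Hartshorn/Scharlemann/Li bound (Lemma~\ref{lem2}) gives $d_{\mathcal C(S)}(V,W)\le 2-\chi(S_{1})\le 2g(S^{\star})\le 2t(K)+4$, contradicting $d\ge 2t(K)+5$. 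Then one untelescopes and reruns the same Euler-characteristic bound on each thick and thin level to show they are all disjoint from $T$, whence $T$ is parallel to a thin $F_{i}$ and the amalgamation inequality applies. Your sketch never isolates this step, and your proposed use of Scharlemann--Tomova at the end would be redundant.

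For part~(2) the gap is similar but the missing ingredient is sharper. The constant $\mathcal N$ is not produced by a generic ``subsurface-projection estimate'': the paper sets $\mathcal N=K'+\mathcal N_{1}$, where $\mathcal N_{1}=\mathrm{diam}_{\mathcal C(T^{2})}(InS)$ comes from Hatcher's finiteness (Lemma~\ref{lem3}) and $K'$ is the constant of Li's Lemma~\ref{lem4} applied with $g=\max\{g(Q),\,t(K)+2\}$. The argument then mirrors part~(1): if $S^{\star}$ (or, after untelescoping, some $S_{i}$ or $F_{i}$) met $T$ essentially, the piece in $E(K)$ would be either essential---putting $p/q\in InS$---or strongly irreducible, in which case Lemma~\ref{lem5} (Bachman--Schleimer--Sedgwick) makes it $\partial$-strongly irreducible up to distance~$1$ from $InS$, and Lemma~\ref{lem4} bounds $d_{\mathcal C(T^{2})}(p/q,\alpha)\le K'$ for some $\alpha\in InS$. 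Either way $\mathrm{diam}_{\mathcal C(T^{2})}(p/q,InS)\le\mathcal N$, a contradiction. Your dichotomy ``inessential/bounded complexity versus compressions produce an essential surface'' does not capture the strongly irreducible, $\partial$-strongly irreducible case, which is precisely where Lemmas~\ref{lem5} and~\ref{lem4} do the work.
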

\begin{remark}
\begin{itemize}
\item In fact, there are infinitely many high distance knots in $S^{3}$, see \cite{MMS};
\item By the second conclusion in Theorem \ref{thm1}, for any given number $n\in N^{+}$, we can construct a satellite knot $K^{\star}\subset S^{3}$ and its companion $K$ so that
$t(K^{\star})-t(K)\geq n$ as follows. For any $K\subset S^{3}$, let $K^{1}$ be the cable knot so that $t(K^{1})=t(K)+1$. And then let $K^{2}$ be the cable knot of $K^{1}$ so that $t(K^{2})=t(K^{1})+1$.   It is not hard to see that $K^{2}$ lies in the solid torus neighborhood of $K$. By the same  argument again and again,  we have a knot $K^{n}$ contained in the solid torus neighborhood of $K$ and $t(K^{n})=t(K)+n$.
\end{itemize}
 \end{remark}

A Heegaard splitting $V_{1}\cup_{S_{1}}W_{1}$ is a Dehn surgery of $V\cup_{S}W$ if there is an embedded simple closed curve $c$ in $V$ or $W$ so that
 $V_{1}$ or $W_{1}$ is a Dehn surgery of $V$ or $W$ along $c$.   Under the second condition in Theorem \ref{thm1},  there is no essential surface in $E(K)$ with $p/q$ slopes as its boundary slopes. So this phenomenon derives out the following corollary.
 \begin{corollary}
 \label{cor1}
 Ler $K$, $K^{\star}$ be the same as in Theorem \ref{thm1}.  Suppose $K^{\star}$ satisfies the  condition in  Theorem \ref{thm1} (2). Then each unstabilized Heegaard splitting of
 $E(K^{\star})$ is a Dehn surgery of   one  of $E(K)$'s.
 \end{corollary}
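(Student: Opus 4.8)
The plan is to derive the corollary from the structural analysis underlying Theorem \ref{thm1}(2), repackaging its output in the language of Dehn surgery of Heegaard splittings. I would work in the amalgamation picture $E(K^{\star})=E(K)\cup_{A}\eta(K)$, where $A$ is the cabling annulus with core the $p/q$ slope. Because $p\geq 2$, the annulus $A$ is essential in $E(K^{\star})$, it is properly embedded with $\partial A\subset\partial E(K^{\star})$, and it separates $E(K^{\star})$ into the companion complement $E(K)$ and the solid torus $\eta(K)$ whose core $c$ is the companion $K$. Let $X\cup_{\Sigma}Y$ be an arbitrary unstabilized Heegaard splitting of $E(K^{\star})$, with $\partial E(K^{\star})=\partial_{-}Y$. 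Since $\Sigma$ is closed while $\partial A\subset\partial_{-}Y$, every component of $\Sigma\cap A$ is a circle; first I would use the incompressibility of $A$ together with standard innermost-circle isotopies to discard every component of $\Sigma\cap A$ bounding a disk in $A$, so that each surviving circle is essential in $A$ and hence isotopic to the $p/q$ core.

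The heart of the argument, and the step I expect to be the main obstacle, is to promote this to the statement that $\Sigma$ can be isotoped to be disjoint from $A$; equivalently, that $\Sigma$ is the amalgamation along $A$ of a Heegaard splitting of $E(K)$ with the trivial genus-one splitting of $\eta(K)$. This is exactly where the hypothesis $diam_{\mathcal{C}(T^{2})}(p/q,InS)>\mathcal{N}$ is used. If an essential circle of $\Sigma\cap A$ survived, then cutting $\Sigma$ along $A$ and running the compression-body and sweep-out analysis from the proof of Theorem \ref{thm1}(2) would force either an essential surface in $E(K)$ with boundary slope $p/q$, or a bound on the relevant curve-complex distance that is violated once $p/q$ is far enough from $InS$. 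Since the far-slope hypothesis guarantees $p/q\notin InS$, no such surface exists, so the aligned position is the only possibility. The point requiring care beyond Theorem \ref{thm1}(2) is that its conclusion is phrased for minimal genus, whereas here $\Sigma$ is merely unstabilized of possibly larger genus; I would re-run the same estimate, which is legitimate because its inputs — the incompressibility of $A$ and the absence of a $p/q$ boundary slope — make no reference to the genus of $\Sigma$, while the unstabilized hypothesis rules out spurious cancellations.

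Granting the alignment, the splitting $X\cup_{\Sigma}Y$ becomes the amalgamation of a Heegaard splitting $V\cup_{S}W$ of $E(K)$, with $S=\Sigma$, $V=X$, and $W=\overline{Y-\eta(K)}$, and the trivial splitting of the solid torus $\eta(K)$ across $A$. It then remains to recognize this amalgamation as a Dehn surgery of $V\cup_{S}W$ in the sense of the definition preceding the corollary: the two splittings differ precisely by the solid torus $\eta(K)$ glued to $W$ along $A$, and reinstalling $\eta(K)$ is exactly the surgery operation performed along the curve $c$, the core of $\eta(K)$, with the surgery slope recording the $(p,q)$ gluing of $A$. Verifying that $\overline{Y-\eta(K)}$ is again a compression body and that this re-gluing is literally the stated surgery is routine bookkeeping once the alignment of $\Sigma$ with $A$ has been secured. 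This shows that every unstabilized Heegaard splitting of $E(K^{\star})$ is a Dehn surgery of one of $E(K)$'s, as claimed.
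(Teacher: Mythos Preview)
Your central step --- isotoping the Heegaard surface $\Sigma$ off the cabling annulus $A$ --- cannot succeed, and this breaks the rest of the argument. Since $\partial A\subset\partial E(K^{\star})=\partial_{-}Y$, disjointness would force $A\subset Y$; but every incompressible annulus in a compression body with both boundary circles on $\partial_{-}$ is $\partial$-parallel there ($\partial$-compress it to a disk whose boundary lies on the incompressible $\partial_{-}$), hence $\partial$-parallel in $E(K^{\star})$, contradicting the essentiality of $A$. So $\Sigma\cap A$ is necessarily nonempty, the ``alignment'' you describe never occurs, and the identification $S=\Sigma$, $W=\overline{Y-\eta(K)}$ has no content. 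The far-slope hypothesis only forbids \emph{essential} surfaces in $E(K)$ with slope $p/q$; it does not, and cannot, empty $\Sigma\cap A$. Note too that the proof of Theorem~\ref{thm1}(2) you invoke works with the closed torus $T^{2}$ in the decomposition $E(K^{\star})=E(K)\cup_{T^{2}}C$, not with $A$; making a Heegaard surface disjoint from a closed essential torus is an entirely different matter from making it disjoint from an essential annulus with boundary.

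The paper's argument proceeds quite differently. In the strongly irreducible case, Claim~\ref{claim5.1} together with the far-slope hypothesis forces the unique strongly irreducible piece of $\overline{\Sigma-A}$ to sit in $E(K)$, so that $\Sigma\cap J$ (with $J=\eta(K)$) consists of nested essential annuli. One then applies the construction from Section~3: the innermost such annulus, together with a sub-annulus of $A$, bounds a small solid torus $ST_{0}\subset J$, and a genuine Dehn surgery along its core curve $l$ collapses $ST_{0}$ to a product, turning the given splitting of $E(K^{\star})$ into a Heegaard splitting $V\cup_{S}W$ of $E(K)$ of the same genus; reversing the surgery exhibits the original splitting as a Dehn surgery of $V\cup_{S}W$. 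In the weakly reducible case one first untelescopes, uses the hypothesis to make $A$ disjoint from the thin surfaces $F_{i}$, and then runs the same surgery inside the single block containing $A$. Your outline omits both this surgery mechanism and the untelescoping step.
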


The conclusion of Corollary \ref{cor1} implies that each unstabilized Heegaard splitting of  $E(K^{\star})$ is a Dehn surgery of   one  of $E(K)$'s. In reverse, doing a Dehn surgery on each unstabilized Heegaard splitting of $E(K)$ also produces a Heegaard splitting of $E(K^{\star})$. Then there is a natural question:

\begin{question}
For any two non isotopic but same genus Heegaard splittings of $E(K)$, is it possible that doing  two Dehn surgeries on them simultaneously produce two isotopic Heegaard splittings
of $E(K^{\star})$?
\end{question}
\begin{remark}
To our best knowledge, there is no evidence showing that whether it is true or false.
\end{remark}

We will introduce some  lemmas in Section 2,  prove Theorem \ref{theorem0} in Section 3, Theorem \ref{thm1}  in Section 4 and Corollary \ref{cor1} in Section 5.

\section{Preliminaries}

Let $V\cup_{S}W$ be a Heegaard splitting. It is stabilized if there is a pair of essential disks in $V$ and $W$ individually so that their boundary curves intersect in one point. Otherwise, it is unstabilized.  A  Heegaard splitting $V\cup_{S}W$ is reducible if there is an essential simple closed curve in $S$ bounding
a disk in $V$ and also a disk in $W$.  Otherwise, it is irreducible.  For any irreducible Heegaard splitting, Casson and Gordon \cite{CG} introduced a weakly reducible and irreducible Heegaard  splitting as follows: $V\cup_{S} W$ is weakly reducible and irreducible if it is irreducible and there are a pair of disjoint essential disks in $V$ and $W$ individually.  Otherwise, it is strongly irreducible.  So  if $V\cup_{S}W$ is unstabilized,   it is either strongly irreducible or  weakly reducible and irreducible.

If $S$ is a closed orientable and genus at least two surface, then there is  a curve complex $\mathcal {C}(S)$ defined on it as follows, see also in \cite{Ha}. The vertices are the isotopic classes of essential simple closed curves in $S$. A $n$-simplex is a set of $n+1$ isotopic classes of  nonisotopic and pairwise disjoint essential simple closed curves in $S$. Without further notation, we abuse a curve and its isotopic class. Then there is a distance defined on the one-skeleton $\mathcal{C}^{1}(S)$. Let $\alpha$ and $\beta$ be arbitrary two essential simple closed curves on $S$. The distance of $\alpha$ and $\beta$, denoted by $d_{\mathcal{C}(S)}(\alpha,\beta)$, is the minimal number of edges connecting $\alpha$ and $\beta$ in $\mathcal{C}^{1}(S)$, i.e., the minimal integer $n$ satisfying $\alpha_{0} \rightarrow \alpha_{1} \rightarrow \cdots \rightarrow \alpha_{n}$ is an edge path in $\mathcal{C}^{1}(S)$ where $\alpha_{0}=\alpha$ and $\alpha_{n}=\beta$ and $\alpha_{i} \rightarrow \alpha_{i+1}$ is an edge (i.e.,~$\alpha_{i}$ is distinct and disjoint from $\alpha_{i+1}$) $(i=0,1,\cdots,n-1)$. It is not hard to see that linearly extending  $d$ to the whole
$\mathcal {C}^{1}(S)$ is a metric of it. It is well known that $(\mathcal {C}^{1}(S), d)$ is gromov hyperbolic, see \cite{MM}.  If $S$ is a torus,  there is also a curve complex  defined on it, which is the Farey graph $\mathcal{C}(T^{2})$.  Similarly, the vertices are the isotopic classes of essential simple closed curves in $T^{2}$. We put  an edge between two isotopic classes of essential, non isotopic, intersecting one point simple closed curves in $T^{2}$.  Then for any two vertices $\alpha$ and $\beta$ in $\mathcal{C}(T^{2})$, the distance $d_{\mathcal{C}(T^{2})}(\alpha,\beta)$ is defined to be the minimal number of edges from  $\alpha$ to $\beta$ in $\mathcal{C}(T^{2})$. It is well known that the diameter of $\mathcal{C}(T^{2})$ is infinite, i.e., $diam(\mathcal{C}(T^{2}))=\infty$.

 There is a disk complex defined on either of $V$ and $W$ as follows.  The vertices are the isotopic classes of essential simple closed curves in $S$ which bound essential disks in $V$ (resp. $W$). A $n$-simplex is a set of $n+1$ isotopic classes of  nonisotopic and pairwise disjoint essential simple closed curves in $S$ which bound essential disks in $V$ (resp. $W$). Denote the disk complex of $V$ (resp. $W$) by $\mathcal {D}(V)$ (resp. $\mathcal {D}(W)$). It is not hard to see that $\mathcal {D}(V) \subset \mathcal {C}(S)$ and $\mathcal {D}(W) \subset \mathcal {C}(S)$.  Hempel \cite{He} defined the Heegaard distance $d_{\mathcal{C}(S)}(V, W)$ to be  the distance between $\mathcal {D}(V)$ and $\mathcal {D}(W)$ in $\mathcal {C}(S)$ and proved that $V\cup_{S}W$ is weakly reducible and irreducible if and only if $d_{\mathcal{C}(S)}(V, W)=1$;   $V\cup_{S}W$ is strongly  irreducible if and only if $d_{\mathcal{C}(S)}(V, W)\geq 2$.

Let $Q$ be a properly embedded essential surface,  i.e.,  incompressible and $\partial$-incompressible,  in a compact orientable 3-manifold $M$.  Hartshorn \cite {H} and Schalemann \cite{Sc}  proved that $d_{\mathcal{C}(S)}(V, W)\leq 2-\chi(Q)$. Later, Scharlemann and Tomova \cite{STo}, Li \cite{Li1} extended this result into a general case.

\begin{definition}
	Let $P$ be a separating surface properly embedded in $M$ and $\overline{M-P}=X\cup Y$.
	Suppose $P$ has compressing disks on both sides. We say $P$ is strongly irreducible if each compressing disk in $X$ meets each compressing disk in $Y$. Otherwise, $P$ is called weakly reducible.
\end{definition}
\begin{definition}
Let $P$ be a strongly irreducible surface.  Then $P$ is called $\partial$-strongly irreducible if
	\begin{enumerate}
		\item every compressing and $\partial$-compressing disk in $X$ meets every compressing and $\partial$-compressing disk in $Y$, and
		\item  there is at least one compressing or $\partial$-compressing disk on each side of $P$.
	\end{enumerate}
	%$P$ is called bicompressible if $P$ is compressible in both $X$ and $Y$. $P$ is called weakly reducible if there is a pair of disjoint compressible disks of $P$ for $X$ and $Y$. Otherwise, $P$ is called strongly irreducible.
\end{definition}

In Li's proof of Theorem 1.1 in \cite{Li1}, it contains the following lemma:
\begin{lemma}\label{lem2}
	Suppose $M$ is a compact orientable irreducible 3-manifold and $P$ is a separating strongly irreducible surface in $M$. Let $Q$ be a properly embedded compact orientable surface in $M$ and suppose $Q$ is either essential or separating strongly irreducible. Then either
	\begin{enumerate}
		\item $d(P)\leq 2-\chi(Q)$, or
		\item after isotopy, $P_{t}\cap Q=\emptyset$ for all $t$, where $P_{t}~(t\in[0,1])$ is a level surface in a sweep-out for $P$, or
		\item $P$ and $Q$ are isotopic.
	\end{enumerate}
\end{lemma}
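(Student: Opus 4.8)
The plan is to prove the lemma by a sweep-out argument in the style of Rubinstein--Scharlemann, Hartshorn \cite{H}, and Scharlemann--Tomova \cite{STo}, which is precisely the machinery underlying Li \cite{Li1}. First I would fix a sweep-out $\{P_t\}_{t\in[0,1]}$ of $M$ realizing $P$, whose level surfaces $P_t$ for $t\in(0,1)$ are isotopic to $P$ and which collapse to spines $\Sigma_0,\Sigma_1$ of the two sides $X,Y$ at $t=0,1$. When $Q$ is essential I keep $Q$ fixed and work in the one-parameter interval; when $Q$ is separating strongly irreducible I also fix a sweep-out $\{Q_s\}$ of $Q$ and work in the square $[0,1]^2$. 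In either case I isotope so that the two surfaces meet in general position, producing the Rubinstein--Scharlemann graphic $\Gamma$, across whose complementary regions the combinatorial type of $P_t\cap Q$ (resp. $P_t\cap Q_s$) is constant, with only finitely many transitions coming from tangencies.

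Next I would label the regular values (regions). A value $t$ receives label $X$ if $P_t$ admits a compressing or $\partial$-compressing disk on the $X$-side that is disjoint from $Q$, and symmetrically for $Y$. Since $Q$ can be pushed off the $1$-dimensional spines, for $t$ near $0$ we have $Q\subset Y_t$, so $P_t$ compresses to the $X$-side disjointly from $Q$ and $t$ is labeled $X$; symmetrically $t$ near $1$ is labeled $Y$. The crucial structural input is that strong irreducibility of $P$ forbids a single regular value from carrying both labels via disks with disjoint boundary curves, since such disks would be compressions on opposite sides with disjoint boundaries. Hence the label must change as $t$ runs from $0$ to $1$, and either (i) the flip occurs across a single critical value, or (ii) some region is \emph{unlabeled}.

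I would then extract the three alternatives from the transition. In case (i) I obtain, at a level $t^{\star}$, an $X$-compression $D_X$ and a $Y$-compression $D_Y$ of $P_{t^{\star}}$, each disjoint from $Q$; their boundaries lie in faces of $P_{t^{\star}}\setminus(P_{t^{\star}}\cap Q)$, and because strong irreducibility forces $D_X$ and $D_Y$ to meet, I string the essential-on-$P_{t^{\star}}$ curves of $P_{t^{\star}}\cap Q$ into an edge-path in $\mathcal{C}(P)$ from $\partial D_X\in\mathcal{D}(X)$ to $\partial D_Y\in\mathcal{D}(Y)$. Consecutive curves are disjoint on $P_{t^{\star}}$, so each step has length one, and an innermost-disk/Euler-characteristic count on $Q$ bounds the number of usable essential intersection curves by $2-\chi(Q)$, giving alternative (1). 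If instead some region is unlabeled, an innermost-disk analysis of $P_t\cap Q$, using irreducibility of $M$ and incompressibility (or strong irreducibility) of $Q$, removes all intersection curves by isotopy and yields alternative (2); in the two-parameter setting the degenerate regions of $\Gamma$ where the two sweep-outs run parallel are exactly what force $P$ isotopic to $Q$, giving alternative (3).

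The hard part will be the bookkeeping that turns a label transition into a genuine curve-complex path of the \emph{correct} length: I must verify that the essential intersection curves really connect $\mathcal{D}(X)$ to $\mathcal{D}(Y)$ with consecutive curves disjoint and with total count $\le 2-\chi(Q)$, and in particular handle the $\partial$-compressing disks so that the boundary contribution to $-\chi(Q)$ is correctly accounted for. A secondary difficulty is the two-parameter case, where one runs the labeling on all four sides of each region of $\Gamma\subset[0,1]^2$, uses strong irreducibility of both $P$ and $Q$ to rule out a region labeled on opposite sides, and then identifies the unlabeled and degenerate regions with alternatives (2) and (3). Proving that these are the \emph{only} outcomes, rather than always recovering the distance bound, is the delicate endgame of the argument.
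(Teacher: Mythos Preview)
The paper does not give its own proof of this lemma: it is simply quoted as a statement extracted from Li's proof of Theorem~1.1 in \cite{Li1}, with no argument supplied in the present paper. Your sketch is precisely the Rubinstein--Scharlemann/Hartshorn/Scharlemann--Tomova sweep-out and graphic argument that Li employs in \cite{Li1}, so your approach is correct and matches the intended source; there is nothing further to compare against here.
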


If $\partial M\neq \emptyset$, there is at least one essential surface with  boundary curves.  When $\partial M $ are some tori, Hatcher \cite{Hat} studied all boundary curves of essential surface in $M$.
In particular, if $\partial M$ is a torus,  there is  a finiteness result about boundary slopes of essential surfaces in $M$ as follows. For a general case,  there are some results on it,  see \cite{HRW, HWZ, LQW}.
\begin{lemma}[Corollary \cite{Hat}]
	\label{lem3}
	Let $M$ be a compact, orientable, irreducible 3-manifold. If $\partial M=T^{2}$, there are finitely many slopes realized by boundary curves of essential surfaces in $M$.
\end{lemma}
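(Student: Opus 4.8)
The plan is to prove this finiteness statement, due to Hatcher \cite{Hat}, via Haken's theory of normal surfaces. Since an essential surface is by definition incompressible and $\partial$-incompressible, and since $M$ is irreducible, I would first fix a triangulation $\mathcal{T}$ of $M$ and appeal to Haken's normalization theorem to isotope every essential surface $Q$ into normal form, so that $Q$ meets each tetrahedron in a disjoint union of normal triangles and normal quadrilaterals. The boundary slope is unchanged under this isotopy, so it suffices to bound the slopes carried by essential normal surfaces. Irreducibility guarantees that the normalization does not destroy $Q$ and that no spurious sphere components are introduced.

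Recording the number of normal disks of each of the seven types in each tetrahedron presents a normal surface as a non-negative integer vector satisfying the matching equations across the faces of $\mathcal{T}$; these equations cut out a rational polyhedral cone $C$. The cone $C$ is finitely generated, so there is a finite set of \emph{fundamental} (Hilbert basis) surfaces, among them the vertex solutions spanning the extreme rays, with the property that every normal surface is a geometric sum (Haken sum) of fundamental pieces. Taking boundaries defines a linear map $S \mapsto [\partial S] \in H_1(\partial M;\mathbb{R}) \cong \mathbb{R}^2$, so $[\partial Q] = \sum_i n_i [\partial v_i]$ when $Q = \sum_i n_i v_i$. The naive danger is now visible: linearity alone only places the possible boundary directions in the two-dimensional cone generated by the $[\partial v_i]$, and a Haken sum of two pieces of distinct slopes $r_1, r_2$ has boundary class $m_1 u_{r_1} + m_2 u_{r_2}$, which as $m_1, m_2$ vary runs through a continuum of slopes lying between $r_1$ and $r_2$ in the Farey sense. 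So finite-dimensionality of $C$ by itself does not yet give a finite slope set.

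The crux — and the step I expect to be the main obstacle — is to show that essentiality collapses this continuum: the boundary slope of an essential normal surface must coincide with the boundary slope of one of the finitely many fundamental surfaces. The mechanism is that the Haken sum is only \emph{reduced}, and the summands only remain essential, when their boundaries are already parallel. Concretely, I would argue that if $Q = Q_1 + Q_2$ is essential while $\partial Q_1$ and $\partial Q_2$ have different slopes, then $\partial Q_1$ and $\partial Q_2$ must intersect on $T^{2}$, and the regular exchange performed along these boundary intersection arcs, together with the innermost-disk and outermost-arc analysis of the interior intersection curves, produces either a compressing disk or a $\partial$-compressing disk for $Q$, contradicting that $Q$ is essential. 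Iterating this reduction expresses the slope of $Q$ as the slope of a fundamental surface. This Jaco--Oertel-type cut-and-paste bookkeeping, ensuring that mismatched slopes always create a compression, is the delicate heart of the proof; by contrast the triangulation, the cone structure, and the linearity of the boundary map are routine.

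Granting the crux, finiteness is immediate: there are only finitely many fundamental surfaces $v_1, \dots, v_k$, hence only finitely many slopes $\{\,[\partial v_i]\,\}$, and every essential surface realizes one of them. I would also note the alternative packaging via branched surfaces: by Floyd and Oertel's theorem every essential surface is carried by one of finitely many branched surfaces, each carrying only essential surfaces, and the same slope-collapsing phenomenon shows that each such branched surface carries essential surfaces of only finitely many boundary slopes; summing over the finite list again yields finiteness. Either route localizes all of the difficulty in the same place, namely proving that distinct boundary slopes cannot be assembled into an essential surface.
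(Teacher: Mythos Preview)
The paper does not prove this lemma; it is quoted as a corollary of Hatcher~\cite{Hat} and used as a black box, so there is no in-paper argument to compare your proposal against.

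For context, your second route via Floyd--Oertel branched surfaces is exactly Hatcher's own proof. He reduces to finitely many incompressible branched surfaces $B_1,\dots,B_k$ carrying all essential surfaces, and then shows that each boundary train track $\tau_i=\partial B_i\subset T^{2}$ carries curves of only one slope. The crux you isolate is dispatched in one line: since $M$ is orientable, each $B_i$ and hence each $\tau_i$ is transversely oriented, so given two carried curves one of them can be pushed off $N(\tau_i)$ in the normal direction, making the pair disjoint; algebraic intersection number zero on a torus forces equal slopes. Your route~1, by contrast, asserts that a Haken sum $Q=Q_1+Q_2$ with mismatched boundary slopes must admit a $\partial$-compression, but supplies no real mechanism: the summands $Q_1,Q_2$ need not themselves be essential, so the innermost-disk/outermost-arc machinery has nothing to act on, and ``regular exchange along boundary arcs produces a compressing disk for $Q$'' is a hope rather than an argument. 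That reduction can be pushed through, but it is substantially more work than you indicate; Hatcher's transverse-orientation trick bypasses it entirely.
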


In particular, Bachman, Schleimer and Sedgewick \cite{BSS} proved the following lemma:
\begin{lemma}[Lemma 4.8\cite{BSS}]
\label{lem5}
 Suppose $M$ is compact orientable irreducible 3-manifold with a  torus boundary. 
 Let $P$ be a separating, properly embedded, connected surface in $M$ which is strongly irreducible, has non-empty boundary, and is
not peripheral. Then either $P$ is $\partial$-strongly irreducible or $\partial P$ is at most distance one
from the boundary of some properly embedded surface which is both incompressible
and boundary-incompressible.
\end{lemma}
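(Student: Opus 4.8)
The plan is to argue directly: assuming $P$ is strongly irreducible but \emph{not} $\partial$-strongly irreducible, I will manufacture an essential (incompressible and $\partial$-incompressible) surface $Q$ with $d_{\mathcal{C}(T^{2})}(\partial Q,\partial P)\le 1$, which is exactly the second alternative. First I would extract a useful pair of disks. Since $P$ is strongly irreducible it already has compressing disks on both sides, so clause (2) in the definition of $\partial$-strong irreducibility holds automatically; hence the failure must be a failure of clause (1). Thus there are disjoint disks $D_{X}\subset X$ and $D_{Y}\subset Y$, on opposite sides of $P$, each a compressing or $\partial$-compressing disk. They cannot both be compressing disks, for that would contradict strong irreducibility, so at least one of them, say $D_{Y}$, is a $\partial$-compressing disk.

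Next I would record the two slope-bookkeeping facts that drive the distance estimate. After discarding any boundary-trivial curves (which bound disks in $T^{2}$ and can be removed using irreducibility of $M$), all components of $\partial P$ are parallel of a single slope $r$, so $T^{2}\setminus\partial P$ is a union of annuli. The $\partial$-compression arc $\beta=\partial D_{Y}\cap T^{2}$ lies in one such annulus, and banding along $\beta$ yields boundary curves of a new slope $r'$ with $d_{\mathcal{C}(T^{2})}(r,r')\le 1$. By contrast, a compression along an interior disk leaves $\partial P$ unchanged. The guiding principle is therefore: perform \emph{exactly one} $\partial$-compression (moving the slope by at most one step) and otherwise only compress (which fixes the slope).

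Concretely, I would $\partial$-compress $P$ along $D_{Y}$ to obtain $P_{1}$ with $d_{\mathcal{C}(T^{2})}(\partial P_{1},\partial P)\le 1$, and then repeatedly compress $P_{1}$ along interior compressing disks until none remain, arriving at an incompressible surface $Q_{0}$ with $\partial Q_{0}=\partial P_{1}$, still within distance one of $\partial P$. Discarding closed components and any disk or $\partial$-parallel components, and using that $M$ is irreducible and $P$ is connected and non-peripheral to guarantee an essential remnant survives, I would set $Q$ to be a remaining component. If $Q$ is $\partial$-incompressible we are done, since $d_{\mathcal{C}(T^{2})}(\partial Q,\partial P)\le 1$.

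The hard part will be the last step: ruling out that $Q_{0}$ (hence $Q$) is $\partial$-compressible, because a further $\partial$-compression threatens to move the slope a second time and break the distance-one bound by accumulation. I would resolve this by an innermost/outermost surgery argument. Any $\partial$-compressing disk for $Q_{0}$ can be isotoped off the interior compressing disks used to build $Q_{0}$ and pulled back to a compressing-or-$\partial$-compressing disk for $P$ lying on the $X$-side and disjoint from $D_{Y}$; strong irreducibility of $P$ then forces its boundary arc to be isotopic, in $T^{2}\setminus\partial P_{1}$, to $\beta$. Consequently the only admissible further $\partial$-compressions either return the slope to $r$ or leave it at $r'$, so the boundary slope of the final essential surface stays within distance one of $r$. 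Verifying that the pulled-back disk is genuinely disjoint from $D_{Y}$, and that strong irreducibility really pins its arc to the expected isotopy class, is the technical core and the place I expect to spend the most care.
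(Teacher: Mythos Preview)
The paper does not prove this lemma at all: it is quoted verbatim as Lemma~4.8 of \cite{BSS} and used as a black box, so there is no ``paper's own proof'' to compare against. What follows is therefore an assessment of your sketch on its own merits.

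Your outline starts well: the reduction to a disjoint pair $D_X,D_Y$ with at least one $\partial$-compression is correct, and the slope bookkeeping (interior compressions fix $\partial P$, a single $\partial$-compression moves it by at most one in $\mathcal{C}(T^2)$) is the right organizing principle. However, two steps are genuine gaps rather than routine verifications. First, ``repeatedly compress $P_1$ along interior compressing disks until none remain'' is not innocuous: after the $\partial$-compression along $D_Y$, the surface $P_1$ may be compressible on \emph{both} sides, and compressing indiscriminately on both sides can reduce it to a collection of disks and boundary-parallel annuli, leaving no essential remnant. You need to compress on a single side and invoke the standard fact that maximally compressing a bicompressible, weakly incompressible surface on one side yields an incompressible surface; but you have not argued that $P_1$ inherits any such weak-incompressibility property from $P$, and in general it need not. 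The appeal to ``$M$ irreducible and $P$ non-peripheral'' does not by itself guarantee survival of an essential piece.

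Second, your proposed resolution of the ``hard part'' does not work as stated. You want to pull a $\partial$-compressing disk for $Q_0$ back through the compressions to a disk for $P$ on the $X$-side disjoint from $D_Y$, and then say ``strong irreducibility of $P$ forces its boundary arc to be isotopic to $\beta$.'' But strong irreducibility constrains only \emph{compressing} disks on opposite sides; it says nothing about the isotopy class in $T^2$ of the arc of a $\partial$-compressing disk, nor does disjointness from $D_Y$ pin that arc to $\beta$ (a priori there are many arc classes in an annulus of $T^2\setminus\partial P$). The actual argument in \cite{BSS} organizes the compressions and $\partial$-compressions more carefully, controlling on which side they occur so that the process terminates in an essential surface after at most one slope change; your sketch has identified the crux but not supplied the mechanism that prevents a second slope move.
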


In general, for any essential surface and any strongly irreducible and $\partial$-strongly irreducible surface in $M$,  Li proved that there is an upper bound of distances between any two of their boundary curves in $\mathcal  {C}(\partial M)$.

\begin{lemma}[Lemma 3.7 \cite{Li2}]
	\label{lem4}
	Suppose $M$ is not an $I$-bundle and has a connected boundary. Let $C_{g}$ be the collection of orientable surfaces properly embedded in $M$ with genus no more than $g$ and boundary is essential in $\partial M$. Let $P$ and $Q$ be surfaces in $C_{g}$. Suppose $Q$ is essential and suppose $P$ is either essential or strongly irreducible and $\partial$-strongly irreducible. Then there exists a number $K^{'}$ that depends only on $g$, such that the distance $d(\partial P,\partial Q)\leq K^{'}$ in $\mathcal{C}(\partial M)$.
\end{lemma}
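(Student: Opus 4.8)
The plan is to bound the geometric intersection number $i(\partial P,\partial Q)$ on $\partial M$ by a function of $g$ alone and then convert this into a curve-complex distance bound. Recall the standard estimate $d_{\mathcal{C}(\partial M)}(\alpha,\beta)\le 2+2\log_{2} i(\alpha,\beta)$, which holds both when $\partial M$ has genus at least two and, in the form appropriate to the Farey graph, when $\partial M$ is a torus. Thus it suffices to produce a bound $i(\partial P,\partial Q)\le f(g)$. Before starting I would dispose of the degenerate situations: if $M$ is reducible or $\partial$-reducible the statement is either vacuous or follows from a direct argument, and if $P$ and $Q$ can be isotoped to be disjoint then $d(\partial P,\partial Q)\le 1$ and there is nothing to prove. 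So assume $M$ is irreducible and $\partial$-irreducible and that $P\cap Q\ne\emptyset$ realizes the minimal intersection within the isotopy classes fixing the slopes $\partial P$ and $\partial Q$ in $\partial M$.

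Next I would analyze $P\cap Q$, which consists of properly embedded arcs and simple closed curves. The closed curves are removed in the usual way: an innermost one bounding a disk in $Q$ gives, by incompressibility of $Q$ (and of $P$ when $P$ is essential, or by the strongly irreducible surgery moves underlying Lemma \ref{lem2} when $P$ is only strongly irreducible), a compression that reduces $\abs{P\cap Q}$, contradicting minimality; so no closed component is inessential and, after a further standard argument, the closed components may be taken to contribute nothing to the count. The essential arcs are the heart of the matter. An arc of $P\cap Q$ that is outermost in $Q$ cuts off a disk $E\subset Q$ with $\partial E=a\cup b$, where $a\subset P$ and $b\subset\partial M$; since $Q$ is $\partial$-incompressible this $E$ is a genuine $\partial$-compressing disk for $P$ whenever $a$ is essential in $P$.

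The bound on the number of arcs then splits according to $P$. When $P$ is essential, $P$ is $\partial$-incompressible, so each such $a$ is inessential in $P$; feeding this back and using the $\partial$-irreducibility of $M$, I would show that a large family of arcs lying in the same isotopy class in $Q$ forces a product region against $P$ that can be used to reduce $\abs{P\cap Q}$, contradicting minimality. Hence both the number of isotopy classes of arcs and the multiplicity within each class are bounded by the complexity of $Q$, and a careful accounting makes these bounds functions of the genus $g$. When $P$ is strongly irreducible and $\partial$-strongly irreducible, I would instead extract, from too many arcs, an outermost $\partial$-compressing disk on one side $X$ of $P$ and an outermost $\partial$-compressing disk on the other side $Y$ that are disjoint; this contradicts condition (1) in the definition of $\partial$-strong irreducibility. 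Either way $i(\partial P,\partial Q)=2\cdot\#\{\text{arcs}\}\le f(g)$, and the distance estimate gives $d_{\mathcal{C}(\partial M)}(\partial P,\partial Q)\le K'(g):=2+2\log_{2}f(g)$.

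The main obstacle, and the step demanding the most care, is making every bound depend on the genus $g$ alone and uniformly over all admissible $M$: one must control the number of boundary components of $Q$ and the multiplicities of parallel arcs, and one must verify that the reductions never alter the fixed slopes $\partial P,\partial Q$ but only the interior intersection. Equally delicate is the $\partial$-strongly irreducible case, where the two outermost $\partial$-compressing disks must be shown to lie on genuinely opposite sides of $P$ and to be disjoint, which is exactly where the technology behind Lemmas \ref{lem2} and \ref{lem5} and the precise definitions of strong and $\partial$-strong irreducibility are needed. An appealing alternative would be to double $M$ along $\partial M$, turning $P$ into a strongly irreducible closed surface and $Q$ into an essential one and applying Lemma \ref{lem2} in the double; but this produces a bound on the interior distance of the doubled surface rather than on $d_{\mathcal{C}(\partial M)}(\partial P,\partial Q)$, and descending from the former to the latter requires a Lipschitz subsurface-projection estimate onto $\mathcal{C}(\partial M)$, so the essential difficulty reappears in a different guise.
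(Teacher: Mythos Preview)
The paper does not prove this lemma at all: it is quoted verbatim as Lemma~3.7 of \cite{Li2} and used as a black box. So there is no ``paper's own proof'' to compare your sketch against; what you have written is an attempted reconstruction of Li's argument.

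As a reconstruction, your outline has a genuine gap. You propose to bound the number of arcs of $P\cap Q$ by a function of the genus $g$ alone, but membership in $C_{g}$ constrains only the genus of $P$ and $Q$, not the number of their boundary components. A genus-$g$ surface can have arbitrarily many boundary components, so both the number of isotopy classes of essential arcs in $Q$ and the possible multiplicities of parallel families grow with $|\partial Q|$, not just with $g$. Your sentence ``one must control the number of boundary components of $Q$'' flags exactly the step that fails: there is no such control available from the hypotheses, so the ``careful accounting makes these bounds functions of the genus $g$'' cannot go through as stated. The same issue undermines the $\partial$-strongly irreducible case, where extracting disjoint outermost $\partial$-compressing disks on opposite sides from ``too many arcs'' presupposes a threshold depending only on $g$.

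Li's actual proof in \cite{Li2} does not proceed by bounding $i(\partial P,\partial Q)$ directly. It uses the arc-and-curve complex of $\partial M$ together with sweepout/graphic arguments and subsurface-projection estimates in the spirit of Masur--Minsky; the bound $K'$ emerges from the hyperbolicity and coarse geometry of these complexes rather than from a combinatorial count of intersection arcs. Your alternative idea of doubling along $\partial M$ and invoking Lemma~\ref{lem2} runs into precisely the difficulty you identify: translating a distance bound for the doubled closed surfaces back to $\mathcal{C}(\partial M)$ is essentially the content of the lemma itself.
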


At the end of this section, we introduce  a  lemma about essential annuli and disks in a compression body.
\begin{lemma}[Lemma 3.1 \cite{Mom}]
	\label{lem1}
	Let $V$ be a nontrivial compression body, and let $\mathcal{A}$ be a collection of pairwise  disjoint essential annuli properly embedded in $V$. Then there is an essential disk properly embedded in $V$ disjoint from $\mathcal{A}$.
\end{lemma}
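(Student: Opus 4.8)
The plan is to produce the desired disk by taking any one essential disk and normalizing it against the annuli, so that the final, unavoidable intersection arc becomes a boundary-compression of a single annulus. Since $V$ is a nontrivial compression body, $\partial_{+}V$ is compressible, so I may start with an essential disk $E\subset V$ having $\partial E\subset\partial_{+}V$. First I would isotope $E$ so that it meets $\mathcal{A}$ transversally with $|E\cap\mathcal{A}|$ minimal. If this intersection is empty we are already done, so assume it is not. Then $E\cap\mathcal{A}$ consists of circles in the interior of $E$ together with arcs whose endpoints lie on $\partial E\cap\partial\mathcal{A}\subset\partial_{+}V$.

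The next step is the standard innermost/outermost cleanup. To kill the circles, let $c$ be a circle of $E\cap\mathcal{A}$ that is innermost on $E$, cutting off a subdisk $E_{0}\subset E$ with $E_{0}\cap\mathcal{A}=c$, and say $c$ lies on an annulus $A\in\mathcal{A}$. Because $A$ is incompressible, $c$ cannot be essential on $A$, as otherwise $E_{0}$ would be a compressing disk for $A$; hence $c$ bounds a disk on $A$, and isotoping $E$ across it lowers $|E\cap\mathcal{A}|$, contradicting minimality. So no circles remain. A parallel argument, now innermost on the annuli, removes every arc that is boundary-parallel in its annulus: such an arc cuts a disk off the annulus which can be used to isotope $E$ and reduce the intersection. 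Consequently, in minimal position every arc of $E\cap\mathcal{A}$ is a spanning arc of the annulus containing it.

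Now I would pick an arc $\alpha$ of $E\cap\mathcal{A}$ that is outermost on $E$, cutting off a subdisk $E_{0}\subset E$ with $E_{0}\cap\mathcal{A}=\alpha$ and $\partial E_{0}=\alpha\cup\beta$, where $\beta\subset\partial_{+}V$. Let $A_{i}\in\mathcal{A}$ be the annulus containing $\alpha$. By the previous step $\alpha$ is a spanning arc of $A_{i}$, so $E_{0}$ is a boundary-compressing disk for $A_{i}$. Boundary-compressing $A_{i}$ along $E_{0}$ converts the annulus into a disk $D'$. Since $E_{0}$ meets $\mathcal{A}$ only along $\alpha\subset A_{i}$, and $D'$ may be pushed to the side of $E_{0}$ off $A_{i}$, the disk $D'$ is disjoint from every annulus of $\mathcal{A}$.

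The remaining point, and the one I expect to carry the real weight, is to check that $D'$ is \emph{essential}. Because a compression body is irreducible, a properly embedded disk is essential exactly when its boundary does not bound a disk in $\partial_{+}V$. If $\partial D'$ did bound such a disk $\Delta$, then $D'$ would be boundary-parallel; reversing the boundary-compression, $A_{i}$ is recovered as (a component of) the frontier of a regular neighborhood of $D'\cup\beta$, which would then sit in a collar of $\partial_{+}V$ and force $A_{i}$ to be boundary-parallel, contradicting the hypothesis that the annuli of $\mathcal{A}$ are essential. Hence $\partial D'$ is essential on $\partial_{+}V$, so $D'$ is an essential disk of $V$ disjoint from $\mathcal{A}$, as required. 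The only delicate step is this last equivalence between boundary-parallelism of $D'$ and of $A_{i}$; everything preceding it is the routine normalization of $E$ against $\mathcal{A}$.
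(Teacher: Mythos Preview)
The paper does not supply its own proof of this lemma; it is quoted from Morimoto's paper as Lemma~3.1 and used as a black box. So there is no in-paper argument to compare yours against.

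Your proof follows the standard route and is correct in outline: normalize an essential disk against $\mathcal{A}$, eliminate circles by incompressibility, eliminate inessential arcs on the annuli, and then boundary-compress along an outermost spanning arc. The one place that needs repair is precisely the step you flag. The assertion that $A_i$ is a component of the frontier of $N(D'\cup\beta)$ is not literally correct: the endpoints of $\beta$ lie on the short subarcs of $\partial A_i$ that were deleted in forming $\partial D'$, not on $\partial D'$ itself, so $D'\cup\beta$ does not assemble the way you need. A clean way to finish is to note instead that a parallel copy of $A_i$ and the pushed-off $D'$ are exactly the two pieces of the frontier of the solid torus $R=N(A_i\cup E_0)$, and the core of $A_i$ generates $\pi_1(R)$. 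If $D'$ bounded a ball $B$ with a disk $\Delta\subset\partial_{+}V$, then $B$ must lie on the side of $D'$ opposite $R$: otherwise $\Delta$ would sit inside the pair of pants $R\cap\partial_{+}V$ with $\partial\Delta$ equal to one of its cuffs, which is impossible. Hence $R\cup_{D'}B$ is a solid torus on whose boundary $A_i$ is a longitudinal annulus, so $A_i$ is boundary-parallel, contradicting essentiality of $A_i$. With this adjustment your argument is complete.
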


\section{$t(K^{\star})\geq t(K)$}

Let $V^{\star}\cup_{S^{\star}}W^{\star}$ be  a minimal genus Heegaard splitting of $E(K^{\star})$.  Since $E(K^{\star})$ is irreducible, $V^{\star}\cup_{S^{\star}}W^{\star}$ is either strongly irreducible or weakly reducible and irreducible.

\medskip
\subsection{$V^{\star}\cup_{S^{\star}}W^{\star}$ is strongly irreducible.}  Since $A$ is an essential annulus in $E(K^{\star})$,  by Schultens  lemma \cite{Sch},  $ S^{\star}\cap A$ consists of nonzero and finitely many essential simple closed curves in both of them.  Under this condition, we assume that $ \mid  S^{\star}\cap A\mid$ is minimal.

\begin{claim}
\label{claim5.1}
One component of $\overline{S^{\star}-A}$ is strongly irreducible while the others are incompressible in $E(K^{\star})-A$.
\end{claim}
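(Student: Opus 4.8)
The plan is to cut everything along $A$ and track compressing disks through the decomposition $M':=\overline{E(K^{\star})-A}$, whose two pieces are $E(K)$ and the solid torus $\eta(K)$. Write $V^{\star}$ (red) and $W^{\star}$ (blue) for the two sides of $S^{\star}$. A compressing disk for a component of $\overline{S^{\star}-A}$ in $M'$ is exactly a compressing disk for $S^{\star}$ that is disjoint from $A$ and lies in $V^{\star}$ or in $W^{\star}$; I call such disks red or blue accordingly. Two preliminary facts are used throughout. First, by Schultens' lemma \cite{Sch} together with the minimality of $\abs{S^{\star}\cap A}$, the curves $S^{\star}\cap A$ are essential in both $S^{\star}$ and $A$; consequently a curve that is essential in a component $S_{i}$ of $\overline{S^{\star}-A}$ is essential in $S^{\star}$, since an inessential one would, via an innermost-disk argument, force a curve of $S^{\star}\cap A$ to bound a disk in $S^{\star}$. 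Hence every compressing disk for a piece is a genuine compressing disk for $S^{\star}$.

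First I would produce compressing disks on both sides that are disjoint from $A$. Since $\partial A\subset \partial_{-}W^{\star}$ and since along each curve of $S^{\star}\cap A$ the annulus $A$ passes transversally from $V^{\star}$ to $W^{\star}$, the sub-annuli of $A$ cut off by $S^{\star}\cap A$ alternate between the two sides, with the two outermost ones lying in $W^{\star}$; it follows that both $A\cap V^{\star}$ and $A\cap W^{\star}$ are nonempty. By minimality and the essentiality of $A$, each component of $A\cap V^{\star}$ (resp.\ $A\cap W^{\star}$) is an essential annulus in the handlebody $V^{\star}$ (resp.\ the compression body $W^{\star}$), both of which are nontrivial since $g(S^{\star})\geq 2$. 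Applying Lemma \ref{lem1} to each of these families yields an essential disk disjoint from the corresponding annuli, that is, a red compressing disk and a blue compressing disk, each disjoint from $A$ and with boundary contained in a single piece. In particular some piece is red-compressible and some piece is blue-compressible in $M'$.

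Then I would run the strong-irreducibility dichotomy. Let $R$ (resp.\ $B$) be the set of pieces admitting a red (resp.\ blue) compressing disk in $M'$; both are nonempty by the previous step. Suppose a red disk $D$ lies on a piece $S_{i}$ and a blue disk $E$ lies on a piece $S_{j}$ with $i\neq j$. Since $D$ and $E$ are disjoint from $A$ and have boundaries on different components of $\overline{S^{\star}-A}$, they can be made disjoint in $E(K^{\star})$; as they lie on opposite sides of $S^{\star}$, this contradicts the strong irreducibility of $S^{\star}$. Hence every piece of $R$ coincides with every piece of $B$, which forces $R$ and $B$ to be one and the same singleton $\{S_{0}\}$. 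Every other component then admits neither a red nor a blue compressing disk, so it is incompressible in $M'$. Finally $S_{0}$ has compressing disks on both sides, and by strong irreducibility of $S^{\star}$ every red disk of $S_{0}$ meets every blue disk of $S_{0}$; therefore $S_{0}$ is strongly irreducible, which is precisely the assertion of the claim.

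The step I expect to be the main obstacle is the construction of compressing disks on both sides that are simultaneously disjoint from $A$: this is what guarantees that the exceptional piece is genuinely strongly irreducible rather than compressible on only one side, and it is also where the minimality of $\abs{S^{\star}\cap A}$ must be leveraged to ensure that the families $A\cap V^{\star}$ and $A\cap W^{\star}$ consist of essential annuli so that Lemma \ref{lem1} applies. The accompanying bookkeeping, namely verifying that the boundaries of the disks produced by Lemma \ref{lem1} remain essential in the individual pieces and that disks lying on distinct pieces can indeed be isotoped off one another, is routine but has to be carried out.
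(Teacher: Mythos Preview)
Your argument is correct and follows essentially the same approach as the paper's proof: both use minimality of $\abs{S^{\star}\cap A}$ to ensure the sub-annuli $A\cap V^{\star}$ and $A\cap W^{\star}$ are essential, apply Lemma~\ref{lem1} (Morimoto) on each side to obtain essential disks disjoint from $A$, and then use strong irreducibility of $S^{\star}$ to force both disks onto a single component $S_{0}$, which is therefore strongly irreducible while the remaining components are incompressible. Your write-up is in fact more careful than the paper's on two points the paper leaves implicit: the parity argument showing both $A\cap V^{\star}$ and $A\cap W^{\star}$ are nonempty, and the verification (via $\pi_{1}$-injectivity of the pieces in $S^{\star}$) that a compressing disk for a piece really is a compressing disk for $S^{\star}$.
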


\begin{proof}
 Not only $A\cap V^{\star}$ but also $A\cap W^{\star}$   consist of finitely many essential annuli. For if not, one of them contains at least one boundary parallel annulus. Then we do an isotopy on $A$ to reduce $\mid S^{\star}\cap A\mid$, which is against  the minimal assumption of $\mid S^{\star}\cap A\mid$.  By Lemma \ref{lem1},  there is an essential disk $D \subset \overline {V^{\star}-A}$ (resp.  $E\subset \overline {W^{\star}-A}$ ). Then $\partial D$  is contained in one subsurface of $S^{\star}-A$, says $S_{1}$.  Since $V^{\star}\cup_{S^{\star}} W^{\star}$ is strongly irreducible, $\partial E\subset S_{1}$. Then any essential disk of $V^{\star}$ disjoint from $A\cap V^{\star}$  has its boundary curve in $S_{1}$. So any essential disk of $W^{\star}$ disjoint from $A\cap W^{\star}$  has its boundary curve in $S_{1}$.  Hence $S_{1}$ is the only compressible subsurface in $\overline{S^{\star}-A}$.  Since $S_{1}\subset S^{*}$ is essential, i.e., the inclusion map on  its fundamental group  is injective, $S_{1}$ is strongly irreducible.

\end{proof}

Let $S_{1}$ be the strongly irreducible  surface of  $\overline{S^{\star}-A}$. Recall that $E(K^{\star})=E(K)\cup_{A}\eta(K)$. Since its interior is disjoint from $A$,   $S_{1}$ lies in either $E(K)$ or the solid torus $\eta(K)$, abbreviated by $J$. If
$S_{1}\subset E(K)$,  then $S^{\star}\cap J$ consists of finitely many nested annuli,  denoted by $\{A_{1},..,A_{n}\}$, for some $n\in N^{+}$.

For each $1\leq i\leq n$, $\partial A_{i}\cap A$ is a pair of essential curves.  Then there is  a $1\leq j\leq n$ so that  $\partial A_{j}\cap A$  is the  innermost in $A$  which bounds an
annulus $A_{0}\subset A$.  So $A_{0}$ is an essential annulus in one compression body, says in $V^{\star}$ for example.  For if not,  we do an isotopy on $A$ to reduce $\mid S^{\star}\cap A\mid$. On one side,  since $\partial A_{0}=\partial A_{j}$,  $\partial A_{0}$ bounds an annulus in $S^{\star}$.  It means that there is a pair of two isotopic essential simple closed curves in $S^{\star}$ bounding an  essential annulus in $V^{\star}$. By the standard outermost disk argument, there is a boundary compression in $A_{0}$  producing an essential disk $D_{0}$. It is not hard to see that $D_{0}$ is separating and cuts out a solid torus $ST$ in $V^{\star}$. On the other side,  $A_{0}\cup A_{j}$ also bounds a solid torus in $J$, denoted by $ST_{0}$.  Moreover, $ST_{0}\subset ST$.   

Let $l$ be the longitude of $ST_{0}$. Then we push it a little into the interior of $ST_{0}$. For simplicity, it is still denoted by $l$. Removing  a regular neighbor of $l$, denoted by $\eta(l)$, in $ST_{0}$ and $ST$ makes $ST$ into a torus I-bundle, where the disk $D_{0}$ is in one of its boundary surface.  Since $V^{\star}$ is cutten into the solid torus $ST$ and a genus less one compression body or handlebody,   $V^{\star}-\eta(l)$ is still a compression body but with one more negative boundary surface.  In $ST-\eta(l)$, we attach a 2-handle along an essential simple closed curve in $\partial \eta(l)$   and  a 3-ball to cancel the resulted 2-sphere  so that the resulted solid torus is actually the
$A_{0}\times I $ and also $A_{j}\times I$.   Similarly we attach a 2-handle addition along the same essential simple closed curve on $V^{\star}-\eta(l)$.  So it produces a new compression body
$V$,  where $\partial _{+}V=S^{\star}$.  Then $V\cup_{S^{\star}}W^{\star}$ is still a Heegaard splitting.
For simplicity,  we replace $S^{\star}$ by $S$, $W^{\star}$ by $W$. Then $V\cup_{S}W$ is a Heegaard splitting.   As we do the Dehn surgery in $ST_{0}\subset J$,
the 3-manifold $M=V\cup_{S}W$ is an amalgamation of $E(K)$ and a solid torus along $A$. Moreover,

\begin{claim}
\label{clm5.2}
$V\cup_{S}W $ is a Heegaard splitting of $E(K)$.
\end{claim}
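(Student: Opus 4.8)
The plan is to finish the claim by recognizing $M=V\cup_{S}W$ as the exterior $E(K)$ itself. Since the compression-body structure is preserved through the $2$-handle and $3$-ball modification, $V\cup_{S}W$ is already known to be a Heegaard splitting of the amalgam $M=E(K)\cup_{A}J'$, where $J'$ is the solid torus $J=\eta(K)$ after the Dehn surgery supported in $ST_{0}$. Everything therefore reduces to proving $M\cong E(K)$, and for that it is enough to show that $J'$ is glued to $E(K)$ along a \emph{longitudinal} annulus, i.e. that the core of $A$ runs exactly once around $J'$; a solid torus attached to $E(K)$ along a longitudinal annulus is merely a collar and can be absorbed.

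First I would record the Seifert-fibered picture behind the cabling. The solid torus $J=\eta(K)$ carries the fibration whose regular fiber is the core of $A$ (the $p/q$-slope) and whose unique exceptional fiber, of multiplicity $p\ge 2$, is the core $K$; in this fibration the cabling annulus $A_{j}=S^{\star}\cap J$ is vertical and separates $J$ into two solid tori. Because $A_{j}$ is essential it is not $\partial$-parallel, so the piece $ST_{0}$ bounded by $A_{0}\cup A_{j}$ is not a product; hence $ST_{0}$ is exactly the fibered side carrying the exceptional fiber $K$, while $J-\mathrm{int}(ST_{0})$ is the trivially fibered (product) side. Consequently the core $l$ of $ST_{0}$ is isotopic to the exceptional fiber $K$.

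Next I would read off the surgery. Drilling $\eta(l)$ and refilling so that $ST_{0}$ becomes the product $A_{0}\times I$ is precisely the operation that resolves the exceptional fiber into a regular one. Thus $J'=(J-\mathrm{int}(ST_{0}))\cup_{A_{j}}(A_{0}\times I)$, and since $A_{0}\times I$ is a collar glued along $A_{j}=A_{0}\times\{1\}$ it may be absorbed, giving $J'\cong J-\mathrm{int}(ST_{0})$, a product-fibered solid torus. Under this identification the annulus $A$ is carried to an annulus whose core is still the $p/q$-slope; but in a product-fibered solid torus a regular fiber runs once around the core, so $A$ is longitudinal in $J'$. Hence $J'\cong A\times I$ is a collar of $A$, so $M=E(K)\cup_{A}J'\cong E(K)$, and $V\cup_{S}W$ is a Heegaard splitting of $E(K)$, as claimed.

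The step I expect to be most delicate is the identification of $ST_{0}$ as the exceptional-fiber side together with the verification that the prescribed refilling genuinely resolves the exceptional fiber rather than yielding some other Dehn filling; concretely this is a framing computation tracking how the new meridian disk of $J'$ meets the core of $A$ and confirming that ``making $ST_{0}$ a product'' drops the winding number of $A$ from $p$ to $1$. The essentiality of $A_{j}$, which forces $ST_{0}$ to carry the exceptional fiber, is the key input here. A secondary check is that the simultaneous $2$-handle addition on the $V^{\star}$ side returns a compression body with $\partial_{+}=S^{\star}$, so that the genus is unchanged and one finally obtains $g(K)\le g(S^{\star})=g(K^{\star})$, i.e. $t(K)\le t(K^{\star})$.
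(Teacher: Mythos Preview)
Your argument is correct, and it reaches the same endpoint as the paper---namely that the surgered solid torus $J'$ is a collar $A\times I$, so $M=E(K)\cup_{A}J'\cong E(K)$---but the route you take is genuinely different.

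The paper stays at the level of the Heegaard surface: after the surgery it observes that $A_{j}$ (and hence every nested annulus $A_{1},\ldots,A_{n}$) becomes parallel to $A_{0}$, isotopes $S$ off $A$, and then argues that $A$ lands inside one of the compression bodies $V$ or $W$; since $A$ is incompressible there, it is $\partial$-parallel, so it cuts off an $A\times I$ collar and $M\cong E(K)$. Your argument instead works directly with the $3$-manifold via the Seifert fibration of $J=\eta(K)$: you identify $ST_{0}$ as the side carrying the exceptional fiber (which is forced by the essentiality of $A_{j}$, exactly as you note), read the prescribed refilling as the surgery that resolves that exceptional fiber, and conclude that in $J'$ the $p/q$-slope has become a longitude. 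This is a cleaner and more conceptual explanation of \emph{why} the particular Dehn filling was chosen, and it bypasses the isotopy of $S$ and the compression-body step entirely. The paper's version, on the other hand, is more elementary in that it never names the Seifert structure and also makes transparent that the post-surgery surface $S$ can be pushed off $A$, which is convenient for the genus bookkeeping. Either way the delicate point is the same one you flagged: that $ST_{0}$ really is the exceptional side, so the refilling drops the winding of $A$ from $p$ to $1$.
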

\begin{proof}
Since $S^{\star} \cap J$ consists of finitely many nested annuli in $J$ which are not parallel to $A$,  by doing the Dehn surgery in $V^{\star}$,  the solid torus $ST_{0}\subset J$ bounded by
$A_{0}\cup A_{j}$ is changed into a new solid torus so that $A_{j}$ is parallel to $A_{0}$.  It means that among of all these annuli $\{A_{1},..,A_{n}\}$,  each one is parallel to $A_{0}$. So we do an isotopy on $S$ so that it is disjoint from $A$.  Therefore, $A$ is in either $V$ or $W$.  Since $A$ has the same core curve with $A_{0}$,  $A$ is incompressible.  Therefore,
$A$ is boundary parallel in $V$ or $W$. So  $A$ cuts out the I-bundle $A\times I$ in $M$.  It means that $M$ is homeomorphic to $E(K)$.
\end{proof}
By Claim \ref{clm5.2},  $g(S^{\star})=g(S)$. Since $g(S^{\star})$ is a minimal Heegaard genus of $E(K^{\star})$, $g(K^{\star})\geq g(K)$. So $t(K^{\star} )\geq t(K)$.

Otherwise, $S_{1}$ lies  in the solid torus $J$.  Recall that a strongly irreducible surface is bicompressible, i.e., being compressed in its two sides, and weakly incompressible,  i.e.,  no disjoint compression disks from its two sides. Scharlemann\cite{scharlemann98} studied the bicompressible but weakly incompressible surfaces in a solid torus, proved the following lemma.
\begin{lemma}[Proposition 3.2\cite{scharlemann98}]
\label{lemma5.1}
Let $c$ be essential simple closed curve in $\partial J$ so that it  neither bounds a disk in $J$ nor intersects the essential disk in $J$ in one point.
Then for any bicompressible, weakly incompressible surface with $c$ as its boundary curve,  it is either a boundary parallel  incompressible annulus  with  a tube parallel to an arc in $\partial J$ attached or the tube sum of two boundary
parallel incompressible annuli and the tube is parallel to an arc in $\partial J$.
\end{lemma}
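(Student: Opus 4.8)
The plan is to recover the structure of $P$ from a sweep-out of the solid torus $J$ by meridian disks, using the strong irreducibility (bicompressibility together with weak incompressibility) of $P$ to force a normal form. Write $J=D^{2}\times S^{1}$, let $m=\partial(D^{2}\times\{\theta\})$ be a meridian, and note that the hypotheses on $c$ say exactly that its geometric intersection number with $m$ is at least two: $c$ is not the meridian since it bounds no disk, and it is not a longitude-type curve since it does not meet the meridian disk in one point. The first thing I would record is the list of admissible incompressible building blocks in $J$: the only incompressible surfaces in a solid torus are meridian disks and boundary-parallel annuli, and the meridian disk is excluded because $\partial P$ has slope $c\neq m$. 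Hence any incompressible piece extracted from $P$ must be a boundary-parallel annulus whose boundary has slope $c$.

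Next I would put $P$ in thin position with respect to the meridian-disk sweep-out $\{D_{\theta}\}_{\theta\in S^{1}}$, viewing the $S^{1}$-coordinate as a circle-valued Morse function on $P$. Before this I would isotope away all closed curves of $P\cap D_{\theta}$: an innermost such curve either is inessential on $P$ and can be removed, or bounds a compressing disk which, paired with the compressing disk on the opposite side coming from the corresponding innermost region on the other sheet, would produce disjoint compressing disks on the two sides of $P$, contradicting weak incompressibility. With only arcs remaining, strong irreducibility prevents an upper compressing (or $\partial$-compressing) region and a lower one from being realized at disjoint levels; the standard sweep-out argument then yields a single thick level $\theta^{*}$. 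Cutting $P$ along $P\cap D_{\theta^{*}}$ exhibits $P$ as an incompressible core, which by the previous paragraph is a disjoint union of boundary-parallel annuli, together with compression regions reattached from the two sides as tubes; an outermost arc of $P\cap D_{\theta^{*}}$ shows each such tube is isotopic to a neighborhood of an arc in $\partial J$, i.e.\ is parallel to a boundary arc.

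It then remains to count, and here weak incompressibility does the work cheaply. Each tube is a $1$-handle attached to the annular core, and cutting it along its co-core is a compression of $P$; the co-cores on the two sides of a single tube meet in a point, but the co-cores associated to two distinct tubes running along disjoint boundary arcs would be disjoint compressing disks on opposite sides of $P$, again violating weak incompressibility. Thus at most one tube survives, and since a tubeless core is an incompressible (hence not bicompressible) annulus, exactly one tube is present. If its two feet lie on a single boundary-parallel annulus we are in the first case of the statement; if they lie on two such annuli we are in the second case. In both cases $\chi(P)=-2$, consistent with a connected bicompressible weakly incompressible surface, and I would finish by checking directly that each model surface is genuinely weakly incompressible.

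The step I expect to be the main obstacle is the reduction in the second paragraph: producing the normal form ``boundary-parallel annuli plus boundary-parallel tubes'' from an arbitrary strongly irreducible $P$. A priori the upper and lower compression regions of the sweep-out can be nested and interleaved in complicated ways, and one must argue that after the width-reducing isotopies the thick level is essentially unique and its complementary pieces are genuinely boundary-parallel annuli rather than more complicated incompressible surfaces; the latter is guaranteed by the solid-torus classification, but matching the tubes to boundary arcs and excluding extra core annuli is precisely where the careful innermost/outermost analysis, driven throughout by the weak-incompressibility hypothesis, is required. Once this normal form is in hand, the counting of the last paragraph is routine.
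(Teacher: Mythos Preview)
The paper does not give its own proof of this lemma: it is quoted verbatim as Proposition~3.2 of \cite{scharlemann98} and used as a black box in Section~3, so there is nothing in the present paper to compare your argument against.

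Your outline is broadly in the spirit of Scharlemann's original proof, which also hinges on the fact that the only incompressible surfaces in $J$ with slope $c$ are boundary-parallel annuli, and then uses weak incompressibility to pin down the tube structure.  A couple of points in your sketch would need tightening before it stands on its own.  First, the step eliminating closed curves of $P\cap D_{\theta}$ is not correct as written: an innermost circle on $D_{\theta}$ that is essential in $P$ gives a compressing disk on \emph{one} side of $P$, but there is no automatic ``corresponding innermost region on the other sheet'' producing a disjoint compression on the other side; one instead uses the irreducibility of $J$ and an isotopy/surgery argument, or appeals to Scharlemann's no-nesting lemma.  Second, the passage from ``strong irreducibility prevents upper and lower regions at disjoint levels'' to ``a single thick level whose complement is a union of boundary-parallel annuli'' is exactly the substantive content of the result and is where Scharlemann's argument does real work; you correctly flag this as the main obstacle, but as written it is an assertion rather than an argument.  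The final tube-counting via disjoint co-cores is fine.  For a rigorous treatment you should consult \cite{scharlemann98} directly.
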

So $S_{1}$ is either a boundary parallel annulus with a tube attached  or the tube sum of two boundary parallel annuli in $J$.   In case of a long argument,  we divide its proof into these two lemmas \ref{lemma5.2} and \ref{lemma5.3}.

\begin{lemma}
\label{lemma5.2}
If  $S_{1}$ is  a boundary parallel annulus $A^{'}$ with a tube attached in $J$,  then $t(K^{\star})\geq t(K)$.
\end{lemma}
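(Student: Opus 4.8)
The plan is to reduce this subcase to the one already settled, where the strongly irreducible piece lies in $E(K)$, by first disposing of the tube and then rerunning the innermost-annulus and Dehn surgery construction of Claim \ref{clm5.2}. To begin, I would fix the parallelism data supplied by Lemma \ref{lemma5.1}: the boundary parallel annulus $A'$ cobounds a solid torus region $R\cong A'\times I$ with a sub-annulus $A'_{0}\subset \partial J$ whose core is the $p/q$ slope $c$, while the tube cobounds a parallelism disk $D$ with an arc $\gamma\subset\partial J$ whose endpoints lie on $\partial A'=\partial S_{1}\subset A$. Writing $\partial J=A\cup A_{\mathrm{out}}$, with $A_{\mathrm{out}}=\partial J\cap\partial E(K^{\star})$ the boundary annulus of the cable complement, I would note that after a small isotopy $\gamma$ lies either in $A_{\mathrm{out}}$ or in $A$, and split the argument along this dichotomy.

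In the first case, $\gamma\subset A_{\mathrm{out}}\subset\partial_{-}W^{\star}$, so the tube can be pushed along $D$ to $\partial E(K^{\star})$. Together with the meridian disk $D_{t}$ of the tube this configuration furnishes a destabilization of $V^{\star}\cup_{S^{\star}}W^{\star}$, contradicting the choice of $S^{\star}$ as a minimal genus Heegaard surface; hence this case cannot occur. In the second case, $\gamma\subset A$, and I would use $D$ to isotope the tube across $A$ into $E(K)$. This isotopy either lowers $\mid S^{\star}\cap A\mid$, against the standing minimality assumption, or carries the complicated part of $S_{1}$ out of the interior of $J$ so that the strongly irreducible piece now sits in $E(K)$.

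In the latter situation we are exactly in the configuration treated in the case $S_{1}\subset E(K)$: the boundary parallel annulus $A'$, being essential in its compression body, yields by an outermost disk argument an essential disk cutting off a solid torus, and the Dehn surgery construction of Claim \ref{clm5.2} converts $V^{\star}\cup_{S^{\star}}W^{\star}$ into a Heegaard splitting $V\cup_{S}W$ of $E(K)$ with $g(S)\le g(S^{\star})$. Since $S$ is a Heegaard surface of $E(K)$ we have $g(S)\ge g(K)$, and therefore $g(K)\le g(S)\le g(S^{\star})=g(K^{\star})$, giving $t(K)\le t(K^{\star})$, as desired.

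The step I expect to be the main obstacle is the bookkeeping around the tube when $\gamma\subset A$. One must verify that the tube can be pushed across $A$ without introducing new inessential intersection curves that would spoil the minimality of $\mid S^{\star}\cap A\mid$, and that the compression body structure of both $V^{\star}$ and $W^{\star}$ survives after the tube is absorbed and the region $R$ is removed by surgery, so that the surgered surface is a genuine Heegaard surface of $E(K)$ and not merely an embedded one. A secondary point requiring care is confirming, in the case $\gamma\subset A_{\mathrm{out}}$, that the disk coming from the parallelism arc and the tube meridian $D_{t}$ meet $S^{\star}$ in arcs intersecting in a single point, so that the claimed destabilization, and hence the contradiction with minimality, is legitimate.
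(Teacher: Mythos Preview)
Your dichotomy is not well posed, and the destabilization step does not go through. You assert that the endpoints of the parallelism arc $\gamma$ lie on $\partial A'=\partial S_{1}\subset A$, and then that ``after a small isotopy $\gamma$ lies either in $A_{\mathrm{out}}$ or in $A$.'' But an arc with both endpoints in $\mathrm{int}(A)$ can never be isotoped rel endpoints into $A_{\mathrm{out}}$; at best it passes through $A_{\mathrm{out}}$ and returns. More seriously, even in that situation the disk $D$ you describe has an arc of its boundary on $\partial E(K^{\star})=\partial_{-}W^{\star}$; it is a $\partial$-compressing disk for $S^{\star}$, not a compressing disk. It therefore cannot pair with the tube meridian $D_{t}$ to form a stabilizing pair, and no contradiction with minimality follows. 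Your ``secondary point requiring care'' is in fact the whole content of that case, and I do not see how to repair it.

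The reduction in your second case is also not the clean reduction you claim. Pushing the tube across $A$ along $D$ alters $S^{\star}\cap A$ (the two curves $\partial A'$ get banded together), and afterwards the strongly irreducible piece straddles $A$: a tube sits in $E(K)$ while the annulus $A'$ remains in $J$. This is not the configuration ``$S_{1}\subset E(K)$ and every component of $S^{\star}\cap J$ is a nested incompressible annulus'' needed to invoke the innermost-annulus argument and Claim~\ref{clm5.2}. You would have to redo the whole analysis of $S^{\star}\cap A$ from scratch, with no guarantee that $\lvert S^{\star}\cap A\rvert$ has not increased.

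The paper takes a completely different route and never tries to move the tube. It splits instead on whether $A'$ is parallel into a subannulus $A''\subset A$ or into the complementary annulus of $A''$ in $\partial J$. In each case it cuts one of the compression bodies along the essential annulus $A''$ (or compresses along the tube's meridian), locates a smaller copy of the solid torus $J$ on one side, and performs the same Dehn surgery used before Claim~\ref{clm5.2} to turn that copy into a product $A''\times I$. Reattaching a neighbourhood of a vertical fibre arc then rebuilds a Heegaard splitting of $E(K)$ of genus at most $g(S^{\star})$. The tube is never isotoped across $A$; the surgery absorbs it.
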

\begin{proof}
Since $\partial A^{'}$ bounds a subannulus $A^{''}\subset  A$, $A^{'}$ is  parallel to either $A^{''}$ or its complement annulus in $\partial J$.

(1) If $ A^{'}$ is parallel to  a subannulus $A^{''}\subset  A$, then $int(A^{''})\cap S^{\star}=\emptyset$ up to isotopy, see Figure 3. For if not, then there is an annulus in $S^{\star}\cap J$ parallel to $A$. 
So we do an isotopy along this annulus to reduce $\mid S^{\star}\cap A\mid$.  Then $A^{''}$ is an essential annulus in one of $V^{\star}$ and $W^{\star}$. Without loss of generality,  we assume that $A^{''}\subset V^{\star}$.  The other case is similar. So we omit it.  Then cutting $V^{\star}$ along $A^{''}$
 produces two handlebodies  or one handlebody and a compression body. Let $V^{'}$ be the compression body or the handlebody  containing no $S_{1}$.

\begin{figure}[htbp]
	\centering
	\includegraphics[width=0.80\textwidth]{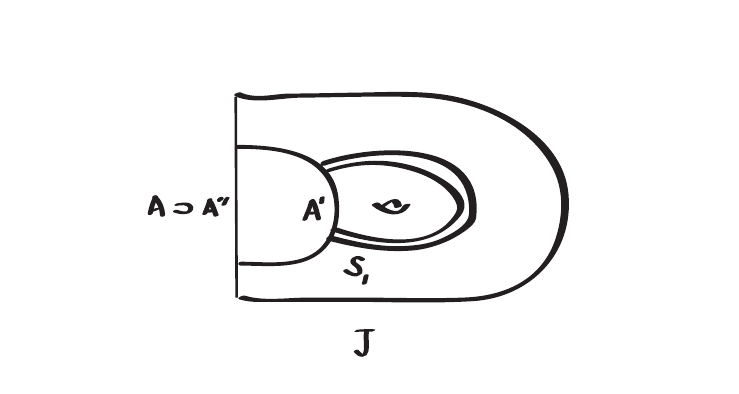}
	\caption{}
\end{figure}

By Claim \ref{claim5.1},  except $S_{1}$, all other components of $S^{\star}\cap J$ are nested annuli in $J$. If there is no annulus in $S^{\star}\cap J$,   there are a pair of two spanning annuli,  says
$A^{2''}$ and $A^{3''}$ so that  one component of  $W^{\star}-A^{2''}\cup A^{3''}$, says $N$, lies in $E(K)$. It is not hard  to see that $N$ is a handlebody and $\partial A^{''}\subset N$.  Let $A^{''}\times I$ be I-bundle in $V^{'}$.  Then we  remove this I-bundle from $V^{'}$ and  attach  it to $N$ along these two annuli $\partial A^{''}\times I$.  So  $N$  is changed into a
new 3-manifold, says $N_{0}$.  In this case, $\partial N_{0}$ contains $\partial E(K)$.  Let $a$ be a vertical arc connecting $A^{''}\times \{0,1\}$ in $A^{''}\times I$.  So $a$ is a fiber arc
in $N_{0}$ connecting $\partial E(K)$ and  the other boundary surface of $N_{0}$.  We remove the union of a regular neighborhood of $\partial E(K)$ and a regular neighborhood of $a$ in $N_{0}$ from $N_{0}$. Then $N_{0}$ is changed into a 3-manifold $W$, which is also the amalgamation of $N$ and a tubed  annulus I-bundle along two essential annuli.  It is not hard to see that $W$ is also a handlebody and furthermore $\overline{E(K)-W}$ is  the  disk sum of $V^{'}$ and $\partial E(K)\times I$,  denoted by $V$.  So $V$ is a compression body and $V\cup_{\partial _{+}V=\partial W} W$ is a Heegaard splitting of $E(K)$.  During the process of this surgery,   $g(\partial W)=g(S^{\star})$. It means that
$g(K^{\star})\geq g(K)$ and $t(K^{\star})\geq t(K)$.

So we assume that there is at least one annulus in $S^{\star}\cap J$.  Let $A^{2}$ be the innermost annulus in $S^{\star}\cap J$, i.e., no other component of $S^{\star}\cap J$ lies between it and $S_{1}$.  Then $A^{2}$ is not boundary parallel into $A$.  For if not, then we can do an isotopy on $A$ to reduce $S^{\star}\cap J$. Let $V^{'}$ be  the handlebody or compression body as above.  Then $\overline{E(K^{\star})-V^{'}}$ contains a smaller copy of $J$.  Then we do a Dehn surgery on this smaller copy of $J$ along its longitude so that  $A^{2}$ is parallel to  $A^{'}$. Then  $\overline{E(K^{\star})-V^{'}}$ is changed  into a new 3-manifold, denoted by $N$ again.  By the same argument  of
 the case that $S^{\star}\cap J$ is incompressible,   $N\cup V^{'}$  is homeomorphic to $E(K)$.   Since $A^{2}\cup A^{'}$ bounds an annulus I-bundle,  let $a$ be a fiber arc connecting
 $A^{2}$ and $A^{'}$ in this I-bundle.  Then we remove  a regular neighborhood of $a$ and then $N$ is changed into a 3-manifold $W$.  By the same argument as above,  $W$ is a compression body or handlebody.  Without loss of generality, we assume that $W$ is a handlebody. Meanwhile, the union of $V^{'}$ and the closed regular neighborhood of $a$ is a  compression body. So  $V\cup_{\partial _{+}V=\partial W} W$ is a Heegaard splitting of $E(K)$. By the same argument,  $t(K^{\star})\geq t(K)$.

 (2)  If $A^{'}$ is parallel to the complement annulus of $A^{''}$ in $\partial J$, then $int(A^{''})\cap S^{\star}=\emptyset$.  For if not,  by Claim \ref{claim5.1},  then there is an innermost annulus $A{''''}$ of $S^{\star}\cap J$  so that its boundary curves lie in the interior of $A^{''}$.  By minimality of $\mid S^{\star}\cap A \mid$, $A^{''''}$ is not boundary parallel to $A$.  Then $A^{''''}\cup A^{''}$  bounds a smaller copy of $J$.  By the similar argument as above, 
 we do a Dehn surgery on this smaller copy of $J$ and get a Heegaard splitting of $E(K)$.  Moreover,  $t(K^{\star})\geq t(K)$.

%\begin{figure}[htbp]
	%\centering
	%\includegraphics[width=0.80\textwidth]{}
	%\caption{}
%\end{figure}

It is not hard to see that the  dual core curve of the tube in $S_{1}$ bounds an essential disk $E$, says in $W^{\star}$. Then we do a compression on $W^{\star}$ along the disk $E$ and get a new 3-manifold $W^{'}$.  Moreover, $S_{1}$ is changed into an annulus $A^{'}$ in $J$.  Then $A^{'}\cup A^{''}$ bounds a smaller copy of $J$ in $\overline{E(K^{\star})-W^{'}}$. Denote $N=\overline{E(K^{\star})-W^{'}}$.
If  $A^{''}$ is an essential annulus in $V^{\star}$, then $N$ is an amalgamation of a handlebody or a compression body $V^{'}$ and a smaller copy of $J$ along $A^{''}$.   So we do a Dehn surgery as above so that  the smaller copy of $J$ is changed into  the I-bundle $A^{''}\times I$ and $N$ is changed into a 3-manifold $V$, where $V$ is the union of $V^{'}$ and $A^{''}\times I$. Then $V$ is homeomorphic to $V^{'}$.  By the same argument as above,  $V^{'}\cup W$ is a Heegaard splitting of $E(K)$ and $t(K^{\star})\geq t(K)+1$.

Otherwise, $A^{''}$ is an essential annulus in $W^{\star}$.  Then both of $A^{'}$ and $A^{''}$ are in $W^{\star}$, where they bound a smaller copy of $J$. Then we do a Dehn surgery on  this smaller copy of $J$ so that they are parallel. 
Hence $W^{\star}$ is changed into a compression body or handlebody $W$. Replace $V^{\star}$ by $V$.  So $V\cup_{S^{\star}}W$ is a Heegaard splitting of $E(K)$ and $t(K^{\star})\geq t(K)$.
\end{proof}

\begin{lemma}
\label{lemma5.3}
If $S_{1}$ is the tube sum of two annuli in $J$, then $t(K^{\star})\geq t(K)$.
\end{lemma}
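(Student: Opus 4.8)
The plan is to reduce this case, after a single compression, to the incompressible-annuli situation already treated in Claim \ref{clm5.2} and in the body of Lemma \ref{lemma5.2}, and then to perform the same longitudinal Dehn surgeries on the smaller copies of $J$. By Lemma \ref{lemma5.1}, $S_1$ is the tube sum of two boundary-parallel incompressible annuli, say $A_1'$ and $A_2'$, where the connecting tube is parallel to an arc in $\partial J$. As in Lemma \ref{lemma5.2}, the meridian of this tube bounds an essential disk $E$ on one side of $S^{\star}$, which we may take to be $E\subset W^{\star}$ after relabeling; the other case is symmetric. First I would compress $S^{\star}$ along $E$. Compressing $S_1$ along the meridian of the tube separates it into the two disjoint boundary-parallel annuli $A_1'$ and $A_2'$, so that after this single compression the surface meets $J$ only in incompressible annuli, which is exactly the configuration handled in Claim \ref{clm5.2}.

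Next I would record, exactly as in Lemma \ref{lemma5.2}, that each $\partial A_i'$ bounds a subannulus of $A$ whose interior is disjoint from $S^{\star}$; otherwise, by Claim \ref{claim5.1} the remaining components of $S^{\star}\cap J$ are incompressible annuli, and an innermost one parallel to $A$ would let us reduce $\mid S^{\star}\cap A\mid$, contradicting its minimality. Hence each $A_i'$ together with its cobounding subannulus of $A$ encloses a smaller copy of the solid torus $J$, say $ST_i$. Performing a Dehn surgery along the longitude of each $ST_i$ makes $A_i'$ parallel to $A$, and by the argument of Claim \ref{clm5.2} all the resulting annuli in $J$ then become parallel to $A$; after isotoping the surface off $A$, the annulus $A$ becomes boundary-parallel and cuts off a copy of $A\times I$, so the surgered manifold $M$ is homeomorphic to $E(K)$ and the surgered surface is a Heegaard surface of $E(K)$. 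As in the case analysis of Lemma \ref{lemma5.2}, one must branch according to whether $A_1'$, $A_2'$ and the disk $E$ lie in $V^{\star}$ or in $W^{\star}$, and in each branch check that the pieces remain compression bodies or handlebodies so that a genuine Heegaard splitting of $E(K)$ results.

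The main obstacle I anticipate is the genus bookkeeping. The Dehn surgeries are genus-preserving, but the single compression along $E$ drops the genus of the Heegaard surface by one, so one must verify that the resulting splitting of $E(K)$ still has genus at most $g(S^{\star})$; equivalently, that the handle carried by the tube is either redundant in $E(K)$ or is reabsorbed during the surgeries, so that $g(K)\leq g(S^{\star})$ and therefore $t(K^{\star})\geq t(K)$. The most delicate branch is the one in which $A_1'$ and $A_2'$ are parallel into complementary subannuli of $A$ and sit on opposite sides of $S^{\star}$, where the compression and the two surgeries interact with one another; there I would argue as in the second half of Lemma \ref{lemma5.2}, tracking the nesting of the solid tori $ST_1$ and $ST_2$ inside the solid torus cut off by the disk dual to the tube, and checking that the reattachment of the compressed handle does not create a compressible configuration that would violate the minimality of $g(S^{\star})$.
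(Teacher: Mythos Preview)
Your overall strategy---compress the tube, then perform longitudinal Dehn surgery on the small solid tori---matches the paper's, and you correctly identify the genus bookkeeping as the crux. However, you leave precisely this point unresolved: compressing along the meridian of the tube does drop the genus by one, and nothing in your outline explains how that handle is recovered. The paper's mechanism, which you are missing, is a \emph{fiber-arc reattachment}. After the compression and the Dehn surgery, the two annuli $A_1$, $A_2$ (or $A_1$ and a subannulus of $A$) cobound a product $A_1\times I$; the paper then drills a vertical fiber arc $a$ out of this product region from one side and attaches its neighborhood to the other side. Because the region is a product, removing $\eta(a)$ from one compression body and adding it to the other keeps both pieces compression bodies, and this move restores the genus so that the resulting Heegaard splitting of $E(K)$ has genus exactly $g(S^{\star})$. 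Without this step your argument only yields a splitting of $E(K)$ of genus $g(S^{\star})-1$, which would give the stronger (and false in general) inequality $t(K^{\star})\geq t(K)+1$.

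A second, smaller issue: your assertion that ``each $A_i'$ together with its cobounding subannulus of $A$ encloses a smaller copy of the solid torus $J$'' is not correct in all cases. The paper distinguishes three configurations---exactly one of $A_1,A_2$ parallel to $A$, both parallel, neither parallel---and in the second case neither annulus bounds a smaller $J$ with $A$; the surgery is then performed (if at all) on a different solid torus, and the compression is taken on the opposite side. Your uniform description of $ST_1$ and $ST_2$ collapses these cases and would lead you to the wrong surgery in at least one of them.
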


\begin{proof}
%If these two annuli are nested,  then the tube lies in the I-bundle between  them.  Then either $S_{1}$ lies in an I-bundle of $A$, so we do an isotopy so that $S_{1}$ is in $E(K)$. Then
%$S^{\star}\cap J$ contains finitely many annuli in $J$. By the argument as before,   $t(K^{\star})\geq t(K)$.  Or,  there is an arc $a_{1}$  in $A$ so that it connects boundary curves of these two annuli. Also there is properly embedded arc $b_{1}\subset S_{1}$ so that $a_{1}\cup b_{1}$  bounds a boundary compression disk $D_{1}$  of $S_{1}$ in $J$. Then we do an isotopy along $D_{1}$ so that $S^{\star}\cap J$ contains finitely many annuli.  By the same argument  of  the case that $S^{\star}\cap J$ is incompressible, $t(K^{\star})\geq t(K)$.  So we assume that
%one of these two annuli, denoted by $A_{1}$,  is boundary parallel to $A$  while the other one, denoted by $A_{2}$,  is not.
Let $S_{1}$ be  the tube sum of two annuli $A_{1}$ and $A_{2}$ in $J$. We say there  is no other component of $S^{\star}\cap J$  in the region between $A_{1}$ and $A_{2}$ in $J$.  For if not, then the tube would not connect them.   It is not hard to see that $\partial A_{1}$ (resp. $\partial A_{2}$) bounds an annulus in $A$. Without loss of generality, we assume that the annulus  bounded by $\partial A_{1}$ in $A$ doesn't contain $\partial A_{2}$.  Then we say that there is no other component of $S^{\star}\cap J$,   of which boundary curves lies in the annulus  bounded by $\partial A_{1}$ in $A$. Otherwise, we either  do  an isotopy on $A$ to reduce $\mid S^{\star}\cap A\mid$ or do a Dehn surgery as above and get $t(K^{\star})\geq t(K)$. 

There are three  types of $A_{1}$ and $A_{2}$ as follows: 
\begin{itemize}
\item  (1) one of them is parallel to $A$, says $A_{1}$ while $A_{2}$ not; 
\item  (2)  both of them are parallel to $A$; 
\item  (3)  neither of them is parallel to $A$. 
\end{itemize}
For  the first case, either  $\partial A_{1}$ separate $\partial A_{2}$ in $A$ or not.  If $\partial A_{1}$ separate $\partial A_{2}$,  then  $A_{1} \cup A_{2}$ bounds a smaller copy of $J$. 
 Then we do a  compression on this tube along the disk $D$ bounded by the core curve.  Without loss of generality, we assume that $D\subset V^{\star}$. So $V^{\star}$ is changed into a new 3-manifold, denoted by $V^{'}$, which is also a compression body or handlebody.  Denoted $\overline{E(K^{\star})-V^{'}}$ by $N$. Again we do a Dehn surgery on $N$ as in the proof of Lemma \ref{lemma5.2} and so the smaller copy of $J$ is changed into the annulus I-bundle $A_{1}\times I$.  So $N$ is changed into $N^{'}$ and $N^{'}\cup V^{'}=E(K)$. Let  $a$ be the vertical arc of $A_{1}\times I$, which connects $A_{1}\times \{0,1\}$.  Then $W=N^{'}-N(a)$ is a compression body or handlebody.  Meanwhile, the union of $V^{'}$ and the closed regular neighborhood of $a$ is a handlebody or compression body, denoted by $V$.  Moreover, $g(\partial_{+} V)=g(S^{\star})$. So $t(K^{\star})\geq t(K)$. If $\partial A_{1}$ doesn't separate $\partial A_{2}$,  then there is a smaller copy of $J$ bounded by $A_{2}$. So we do a dehn surgery as above.  Then $t(K^{\star})\geq t(K)$.
 
 For the second case,  $A_{1}$ and $A_{2}$ are not nested.   Without loss of generality, we assume that the disk $D$ bounded by core curve of this tube lies in $V^{*}$.  Then we do a
 compression on $S_{1}$ along an essential disk $E$ in $W^{*}$.  So $W^{*}$ is changed into a new compression body or handlebody, says $W^{'}$.  Since $\partial A_{1}$ and $\partial A_{2}$ bound two essential annuli in $V^{*}$,  cutting $V^{*}$ along them produce a handlebody or compression body, says $V^{'}$. Then $E(K^{*})-W^{'}$ is the amalgamation of 
 $V^{'}$ and an annulus I-bundle or a smaller copy of $J$. For the first case, removing a closed neighborhood of a fiber arc in this I-bundle changes $E(K^{*})-W^{'}$ into a handlebody or compression body $V$.  For the later case, we do a dehn surgery on  this smaller copy of $J$ so that it is changed into an annulus I-bundle.  In both of these two cases, $t(K^{\star})\geq t(K)$.

For the third case,  $A_{1} $ and $ A_{2}$  are parallel in $J$ and there is  a smaller copy of $J$ bounded by a subannulus in $A$ and $A_{1}$.  For if not, then both of $A_{1}$ and $A_{2}$ lie in an I-bundle of $A$.  Then we do an isotopy to reduce 
 $S^{\star}\cap A$.  We do a  compression on this tube along the disk $D$ bounded by the core curve.  Without loss of generality, we assume that $D\subset V^{\star}$. So $V^{\star}$ is changed into a new 3-manifold, denoted by $V^{'}$, which is also a compression body or handlebody.  Denoted $\overline{E(K^{\star})-V^{'}}$ by $N$. Since there is no other component of $S^{\star}\cap J$ lie between $A_{1}$ and $A$,  this smaller copy of $J$ lies in $V^{'}$.  By the same argument in the proof of Lemma \ref{lemma5.2}, we  do a Dehn surgery on this smaller copy of $J$ so that it is changed into $A_{1}\times I$.  So $V^{'}$ is changed into a new compression body or handlebody $V^{''}$. Moreover, $N\cup V^{''}=E(K)$.  On one side, we  attach a closed regular neighborhood of a vertical fiber arc $a$ in the I-bundle bounded by $A_{1}$ and $A_{2}$ to $V^{''}$ so that $V^{''}$ is changed into a compression body or handlebody, says $V$.  On the other side,  $W=\overline{N-N(a)}$ is still a handlebody or compression body and $\partial_{+}W=\partial_{+}V$. During this process,  $g(\partial_{+}V)=g(S^{\star})$ and $t(K^{\star})\geq t(K)$.
\end{proof}

\begin{figure}[htbp] \centering
	\subfigure[]{\label {fig:a}
		\includegraphics[width=0.8\textwidth]{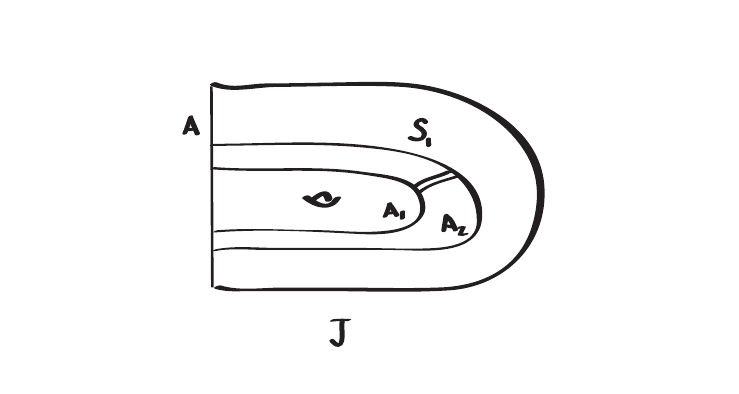}
	}
	\subfigure[]{\label {fig:b}
		\includegraphics[width=0.8\textwidth]{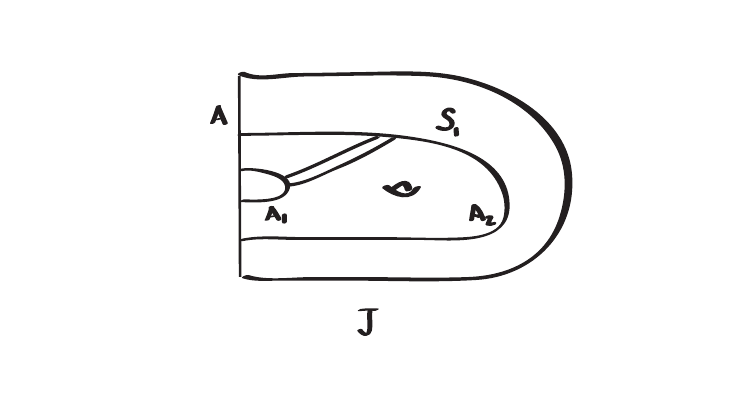}
	}
	\caption{}
\end{figure}

\subsection{$V^{\star}\cup _{S^{\star}} W^{\star}$ is weakly reducible and irreducible. } By the main result in \cite{STh},  $V^{\star}\cup _{S^{\star}} W^{\star}$ has an untelescoping,  says $(V_{1}\cup_{S_{1}}W_{1})\cup_{F_{1}}...\cup_{F_{n-1}}(V_{n}\cup_{S_{n}}W_{n})~(n\geq 2)$. We assume that each component of $[(\mathop\cup\limits_{i=1}^{n} S_{i})\cup (\mathop\cup\limits_{i=1}^{n-1} F_{i})]\cap A$  is essential in both of them up to isotopy.  
Under this condition, we assume that  $\mid [(\mathop\cup\limits_{i=1}^{n} S_{i})\cup (\mathop\cup\limits_{i=1}^{n-1} F_{i})]\cap A \mid$ is minimal. Then each component of $(\mathop\cup\limits_{i=1}^{n-1} F_{i})\cap J$ is an  incompressible  annulus  in $J$, which is not boundary parallel to $A$ in $J$. So  $(\mathop\cup\limits_{i=1}^{n-1} F_{i})\cap J$  are nested annuli in $J$. Let $A_{1}$ be  the innermost one among $ (\mathop\cup\limits_{i=1}^{n-1} F_{i})\cap J $. Since $\partial A_{1}$ bounds an annulus $A^{'}\subset A$, $A_{1}\cup A^{'}$  bounds a smaller copy of  $J$, denoted by $J^{'}$. Then $J^{'}$ lies in some $V_{i}\cup_{S_{i}}W_{i}$, for some $1\leq i\leq n$. 

By Claim \ref{claim5.1},  $S_{i}\cap J$ contains at most one strongly irreducible  surface, denoted by $S_{i,1}$.  If there is no strongly irreducible surface in $S_{i}\cap J$, by the similar argument as above, we do a Dehn surgery on $V_{i}\cup_{S_{i}}W_{i}$ so that
$J^{'}$ is changed into  the annulus I-bundle $A^{'}\times I$.  Then the Heegaard splitting $V_{i}\cup_{S_{i}} W_{i}$ is changed into $V_{i,1}\cup_{S_{i}} W_{i,1}$.  So the amalgamation
$(V_{1}\cup_{S_{1}}W_{1})\cup_{F_{1}}...\cup_{F_{i-1}} (V_{i,1}\cup_{S_{i}}W_{i,1})\cup_{F_{i}}...\cup_{F_{n-1}}(V_{n}\cup_{S_{n}}W_{n})~(n\geq 2)$ is a Heegaard splitting of $E(K)$.
Hence  $g(K^{\star})\geq g(K)$ and hence $t(K^{\star})\geq t(K)$.

So we assume that $S_{i,1}$ is strongly irreducible in $J^{'}$. By Lemma \ref{lemma5.1}, $S_{i,1}$ is either a boundary parallel annulus with a tube attached or the tube sum of two boundary parallel annuli in $J^{'}$. Though  the argument is almost same to the proofs in Lemma \ref{lemma5.2} and \ref{lemma5.3}, it is slightly different.  Without loss of generality, we  assume that
the dual disk to the tube lies in $V_{i}$.  For the first case,  we do a compression on $S_{i,1}$ along a non-separating  essential disk $D$ in $W_{i}$. On one hand,  $W_{i}$ is changed into a genus less one compression body or handlebody $W_{i,1}$.  On the other hand,   $\partial S_{i,1}$ bounds an  incompressible annulus $A^{''}\subset A^{'}$.  So $A^{''}$ is an essential  annulus in $V_{i}$. Then one component of $\overline{V_{i}-A^{''}}$ contains no $S_{i,1}$, denoted by $V^{''}$. It is not hard to see that $V^{''} $ is a  handlebody or compression body.  Since $S_{i,1}$ is a genus one, two boundary curves compact surface,  $S_{i,1}$ is changed into an annulus $A^{2}$  and $\partial A^{2}=\partial S_{i,1}=\partial A^{''}$.    It  means that the complement  of $W_{i,1}$ in  $M_{i}=V_{i}\cup_{S_{i}} W_{i}$,  denoted by $N_{i}$, is the amalgamation of $V^{''}$ and the solid torus bounded by $A^{2}\cup A^{''}$ along $A^{''}$.

It is known that  $A^{2}$ is incompressible in $J$.  Then either $A^{2}$ is parallel to $A^{''}$ or there is a smaller copy of $J^{'}$ bounded by $A^{2}\cup A^{''}$. If $A^{2}$ is parallel to $A^{''}$,  then $N_{i}$ is homeomorphic to $V^{''}$.  So it is a handlebody or compression body, denoted by $V_{i,1}$.  It means that   $V_{i,1}\cup_{\partial _{+}V_{i,1}} W_{i,1}$ is a genus $[g(S_{i})-1]$ Heegaard splitting of $V_{i}\cup_{\partial_{+}V_{i}}W_{i}$, which is impossible.
So $A^{2}\cup A^{''}$ bounds a smaller copy of $J^{'}$.  Then by the same argument in Lemma \ref{lemma5.3}, we do a Dehn surgery on $J^{'}$ in $N_{i}$ so that $N_{i}$ is changed into a compression body or handlebody  $V_{i,1}$. And   $V_{i,1}\cup_{\partial _{+}V_{i,1}} W_{i,1}$ is a genus $[g(S_{i})-1] $ Heegaard splitting. By the similar argument,   $(V_{1}\cup_{S_{1}}W_{1})\cup_{F_{1}}...\cup_{F_{i-1}} (V_{i,1}\cup_{S_{i}}W_{i,1})\cup_{F_{i}}...\cup_{F_{n-1}}(V_{n}\cup_{S_{n}}W_{n})~(n\geq 2)$ is a Heegaard splitting of $E(K)$. So $t(K^{\star})\geq t(K)+1$.

The left case is that $S_{i,1}$ is the tube sum of two boundary parallel annuli in $J^{'}$. By the same argument in Lemma \ref{lemma5.3},  $V_{i}\cup_{S_{i}} W_{i}$ is changed into  $V_{i,1}\cup_{\partial _{+}V_{i,1}} W_{i,1}$, where $g(S_{i})=g(\partial_{+} V_{i,1})$.  Moreover,  $(V_{1}\cup_{S_{1}}W_{1})\cup_{F_{1}}...\cup_{F_{i-1}} (V_{i,1}\cup_{S_{i}}W_{i,1})\cup_{F_{i}}...\cup_{F_{n-1}}(V_{n}\cup_{S_{n}}W_{n})~(n\geq 2)$ is a Heegaard splitting of $E(K)$. So  $t(K^{\star})\geq t(K)$.

\section{The proof of Theorem \ref{thm1}}

Let $K$, $K^{\star}$, $t(K)$,  $t(K^{\star})$, $T^{2}$ and $\mathcal {C}(T^{2})$ be the same as in Section 1. We rewrite Theorem \ref{thm1} as the following proposition:

\begin{proposition}
\label{pro4.1}
	Suppose $K^{\star}$ is a $(p\geq 2,q)$-cable knot over a nontrivial knot $K$ in $S^{3}$.
	\begin{enumerate}%$\mathcal{I}$
		\item If $E(K)$ admits a distance at least $2t(K)+5$ Heegaard splitting, then $t(K^{\star})=t(K)+1$.
		\item Let $InS$ be the collection of boundary slopes of essential surfaces properly embedded in $E(K)$.  Then there is a constant  $\mathcal{N}$ depending on $K$ so that if
		\begin{equation*}
		diam_{\mathcal{C}( T^{2})}(p/q,InS)>\mathcal{N},
		\end{equation*} then $t(K^{\star})=t(K)+1$.
	\end{enumerate}
\end{proposition}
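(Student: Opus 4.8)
The plan is to combine Theorem \ref{theorem0} with the distance and complexity bounds recorded in Section 2. Since Theorem \ref{theorem0} already gives $t(K)\le t(K^{\star})\le t(K)+1$, in each part it suffices to rule out the possibility $g(K^{\star})=g(K)$. So suppose, for contradiction, that $V^{\star}\cup_{S^{\star}}W^{\star}$ is a minimal genus Heegaard splitting of $E(K^{\star})$ with $g(S^{\star})=g(K)$, and recall $E(K^{\star})=E(K)\cup_{A}J$ where $J=\eta(K)$ is a solid torus and the core of the essential annulus $A$ has slope $p/q$. As in Section 3 I would isotope $S^{\star}$ so that $S^{\star}\cap A$ is a non-empty family of curves essential in both surfaces (Schultens' lemma \cite{Sch}) with $\abs{S^{\star}\cap A}$ minimal, and split into the cases that $S^{\star}$ is strongly irreducible or weakly reducible and irreducible; in the latter case I pass to an untelescoping as in \cite{STh}. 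In either case Claim \ref{claim5.1}, applied to $S^{\star}$ (resp. to each thick level of the untelescoping), produces in $E(K)$ a surface $S_{1}$ that is either an essential annulus or a connected, separating, non-peripheral, strongly irreducible surface, with $\partial S_{1}$ of slope $p/q$ and genus at most $g(S^{\star})=g(K)$. The whole argument is organized around this surface $S_{1}$.

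First I would dispose of the subcase in which the strongly irreducible piece lies in the solid torus $J$ rather than in $E(K)$. Then, by Claim \ref{claim5.1}, $S^{\star}\cap E(K)$ consists only of incompressible annuli whose boundaries have slope $p/q$; minimality of $\abs{S^{\star}\cap A}$ forbids any of them from being boundary parallel, so each is essential in $E(K)$ and thus $p/q\in InS$. Under hypothesis (2) this already contradicts $diam_{\mathcal{C}(T^{2})}(p/q,InS)>\mathcal{N}$ as soon as $\mathcal{N}\ge 0$, while under hypothesis (1) Lemma \ref{lem2}, applied with $P=S$ the distance $\ge 2t(K)+5$ Heegaard surface and $Q$ such an essential annulus, forces $d(S)\le 2-\chi(Q)=2$, contradicting the distance hypothesis. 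Hence in both parts the strongly irreducible surface $S_{1}$ must lie in $E(K)$.

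For part (1) I would feed $S_{1}$ into Lemma \ref{lem2} as the surface $Q$, taking $P=S$ to be the given Heegaard surface of distance $d(S)\ge 2t(K)+5$, which is strongly irreducible since $d(S)\ge 2$. Cutting the closed genus $g(K)$ surface $S^{\star}$ along circles preserves Euler characteristic and the only non-annular piece is $S_{1}$, so $2-\chi(S_{1})\le 2g(K)=2t(K)+2<2t(K)+5\le d(S)$; thus alternative (1) of Lemma \ref{lem2} is impossible. Alternative (3) is impossible because $S$ is closed while $S_{1}$ has non-empty boundary. The remaining alternative (2), that $S_{1}$ is disjoint from every level surface of a sweep-out of $S$, would confine $S_{1}$ to one of the two compression bodies determined by $S$; since $\partial S_{1}\subset\partial_{-}W=\partial E(K)$ this puts $S_{1}$ in $W$, where such a confined surface is forced to be boundary parallel, contradicting that $S_{1}$ is non-peripheral. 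This contradiction shows $g(K^{\star})\ne g(K)$, hence $t(K^{\star})=t(K)+1$.

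For part (2) I would instead exploit that $S_{1}\subset E(K)$ is a connected, non-peripheral, strongly irreducible surface with $\partial S_{1}$ of slope $p/q$, and apply Lemma \ref{lem5}: either $S_{1}$ is $\partial$-strongly irreducible, or $p/q$ is distance at most one from the boundary slope of some essential surface, i.e. $d_{\mathcal{C}(T^{2})}(p/q,InS)\le 1$. In the first case, fixing once and for all an essential surface $Q$ in $E(K)$ (which exists, as $E(K)$ is Haken) and letting $g$ bound the genera of $S_{1}$ and $Q$, Lemma \ref{lem4} (valid since $E(K)$ has connected torus boundary and is not an $I$-bundle) supplies a constant $K'$ depending only on $g$ with $d(\partial S_{1},\partial Q)\le K'$, whence $d_{\mathcal{C}(T^{2})}(p/q,InS)\le K'$. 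Taking $\mathcal{N}=\max\{1,K'\}$, which depends only on $K$ through $g(K)$ and the complexity of the fixed essential surface, both alternatives contradict the hypothesis, so $t(K^{\star})=t(K)+1$. I expect the main obstacle to be the bookkeeping that extracts a single surface $S_{1}$ with all required properties uniformly across the strongly irreducible and the untelescoped weakly reducible cases, and in particular verifying that $S_{1}$ is genuinely non-peripheral, connected, and of boundary slope exactly $p/q$; this non-peripherality is precisely what kills alternative (2) of Lemma \ref{lem2} in part (1) and licenses Lemma \ref{lem5} in part (2). A secondary point to watch is that the genus bound on $S_{1}$ survives the untelescoping, so that the constant in Lemma \ref{lem4} and the estimate $2-\chi(S_{1})\le 2t(K)+2$ depend only on $g(K)$.
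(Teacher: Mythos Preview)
Your approach is genuinely different from the paper's in two ways. First, the paper decomposes $E(K^{\star})$ along the closed essential torus $T^{2}=\partial E(K)$, writing $E(K^{\star})=E(K)\cup_{T^{2}}C$ with $C$ the cable space, whereas you follow Section~3 and cut along the annulus $A$. Second, and more substantially, the paper's endgame is structural: it shows (under either hypothesis) that every thick and thin level of the untelescoped minimal splitting can be isotoped off $T^{2}$, so $T^{2}$ must coincide with some thin level $F_{i}$, the splitting is an amalgamation along $T^{2}$, and the inequality $g(K^{\star})\ge g(K)+g(C)-g(T^{2})=g(K)+1$ drops out of the amalgamation genus formula together with the fact $g(C)=2$. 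You instead invoke Theorem~\ref{theorem0} to reduce to $g(S^{\star})=g(K)$ and then seek a single surface $S_{1}\subset E(K)$ contradicting the hypothesis. The paper's route gives slightly more (it underlies Corollary~\ref{cor1}) and sidesteps the bookkeeping you flag; your route is more economical in that it reuses Section~3 wholesale, never introduces $C$, and makes the slope $p/q$ of $\partial S_{1}$ automatic (in the $T^{2}$-decomposition this is an extra claim).

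There are two genuine soft spots in your sketch. In the weakly reducible case you say you apply Claim~\ref{claim5.1} ``to each thick level,'' but that only makes sense once $A$ sits inside a single block $M_{i}$; you must first dispose of the case where some thin level $F_{j}$ meets $A$ essentially (this yields an incompressible piece of $F_{j}$ in $E(K)$ with slope $p/q$, but you then need it to be non--boundary-parallel to feed it into Lemma~\ref{lem2} or to place $p/q$ in $InS$), and in the remaining case argue that strong irreducibility of $S_{1}$ in $M_{i}\cap E(K)$ promotes to strong irreducibility in all of $E(K)$ across the incompressible $F_{j}$'s. Secondly, your dismissal of alternative (2) of Lemma~\ref{lem2} (``forced to be boundary parallel'') is correct when $S_{1}$ is incompressible, but is not obviously valid when $S_{1}$ is bicompressible; the cleaner observation, which the paper uses implicitly, is simply that a properly embedded surface with nonempty boundary on $\partial E(K)$ can never be isotoped off \emph{all} sweep-out levels $P_{t}$, so alternative~(2) is vacuous here. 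With these points filled in your argument goes through.
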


\begin{proof}
    Since $K^{\star}$ is  in $T^{2}=\partial \eta(K)=\partial E(K)$,
    we can slightly push $K^{\star}$ into the interior of $\eta(K)$. Then $E(K^{\star})=E(K)\cup_{T^{2}}C$ where $C=(T^{2}\times I)\cup_{A}\eta(K)$, see Figure 5.
    \begin{figure}[htbp]
    	\centering
    	\includegraphics[width=0.80\textwidth]{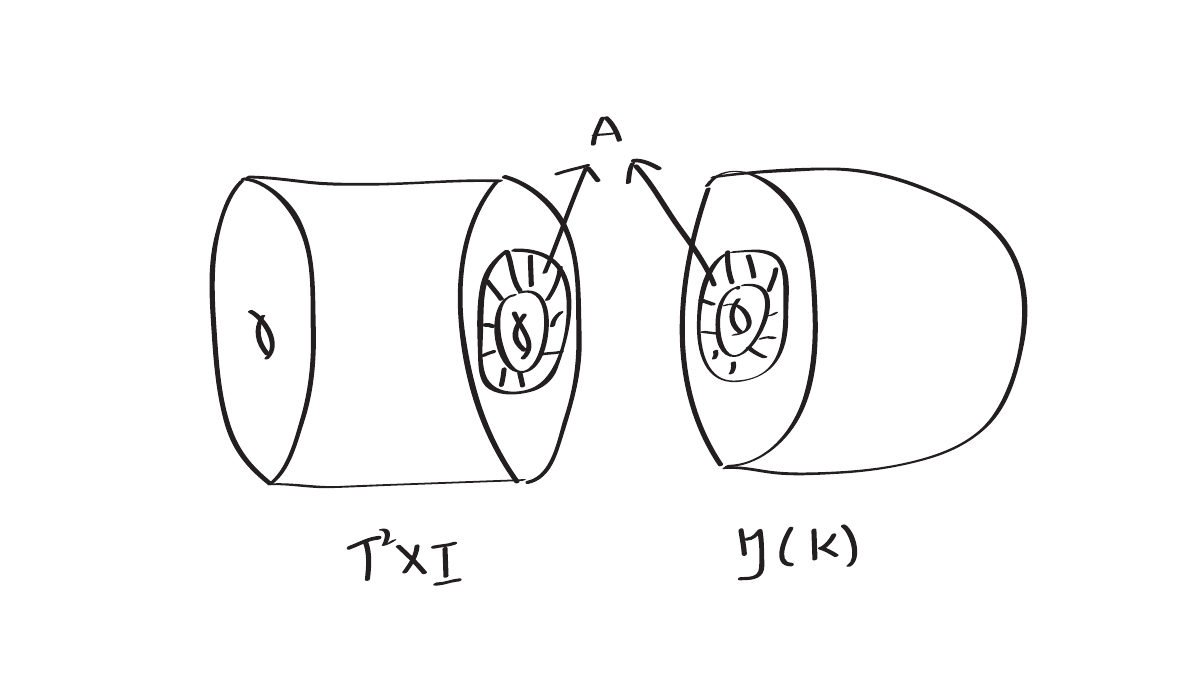}
    	\caption{$C=(T^{2}\times I)\cup_{A}\eta(K)$}
    \end{figure}
	
	On one side, since $K$ is nontrivial, $T^{2}$ is incompressible in $E(K)$. On the other side, $T^{2}$ is incompressible in $C$ and not parallel to $\partial C-T^{2}=\partial E(K^{\star})$.  Then $T^{2}$ is essential in $E(K^{\star})$. We will prove Proposition \ref {pro4.1} (1) in subsection 4.1 and \ref {pro4.1} (2) in subsection 4.2.
	
  \subsection{Proof of Proposition 4.1 (1)}
	
	Let $V^{\star}\cup_{S^{\star}}W^{\star}$ be a minimal genus Heegaard splitting of $E(K^{\star})$ with $\partial_{-}V^{\star}=\partial E(K^{\star})$. Then it is either strongly irreducible or weakly reducible and irreducible.
	
	\begin{lemma}\label{lemma1}
		$V^{\star}\cup_{S^{\star}}W^{\star}$ is weakly reducible and irreducible.
	\end{lemma}
    \begin{proof}	
    Suppose the conclusion is false. Then $V^{\star}\cup_{S^{\star}}W^{\star}$ is strongly irreducible. Since $T^{2}\subset E(K^{\star})$ is essential, then $T^{2}$ intersects $S^{\star}$ nontrivially up to isotopy. By Schultens' lemma \cite{Sch}, we  assume that (1) $|S^{\star}\cap T^{2}|$ is minimal; (2) each simple closed curve of $S^{\star}\cap T^{2}$ is essential in both $S^{\star}$ and $T^{2}$.
    \begin{claim}\label{claim1}
    	There is at most one strongly irreducible component in $S^{\star}\cap E(K)$ while others are essential in $E(K)$.
	\end{claim}
	\begin{proof}
		Since $T^{2}\cap V^{\star}$ (resp. $T^{2}\cap W^{\star}$) is a collection of disjoint essential annuli in $V^{\star}$ (resp. $W^{\star}$), by Lemma \ref{lem1}, there is a compressible disk $B$ (resp. $D$) in $V^{\star}$ (resp. $W^{\star}$) so that $B$ (resp. $D$)  disjoint from $T^{2}\cap V^{\star}$ (resp. $T^{2}\cap W^{\star}$). Since $S^{\star}$ is strongly irreducible, both $B$ and $D$ lie in $E(K)$ or $C$ . Furthermore, $\partial B$ and $\partial D$ lie in a same component $S_{1}$ of $\overline{S^{\star}-T^{2}}$. Moreover, $S_{1}$ is strongly irreducible while other components of $\overline{S^{\star}-T^{2}}$ are essential. For if not, then there is another compressible component of $\overline{S^{\star}-T^{2}}$. It means that $S^{\star}$ is weakly reducible, see Figure 6.
	\end{proof}
	
	\begin{figure}[htbp]
		\centering
		\includegraphics[width=0.60\textwidth]{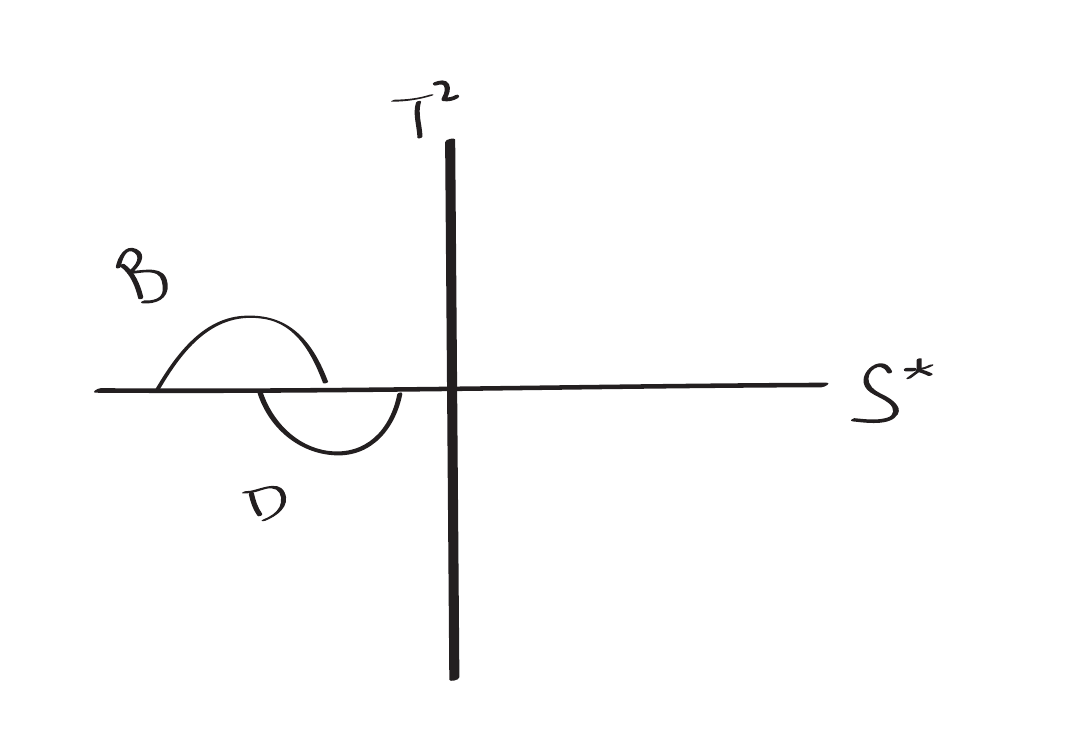}
		\caption{$S^{\star}\cap E(K)$}
	\end{figure}
	
	Let $V\cup_{S}W$ be a distance at least $2t(K)+5$ Heegaard splitting of $E(K)$. Then $S$ is strongly irreducible in both $E(K)$ and $E(K^{\star})$. By Claim \ref{claim1}, there is at most one strongly irreducible component in $S^{\star}\cap E(K)$ while others are essential in $E(K)$. If $S^{\star}\cap E(K)$ contains only one strongly irreducible component $S_{1}$, then $S_{1}$ is separating. Since $S_{1}$ intersects $T^{2}$ nontrivially up to isotopy, $S_{1}$ and $S$ are not well-separated.  Moreover, $S_{1}$ is not isotopic to $S$. Then by Lemma \ref{lem2}, $d_{\mathcal{C}(S)}(V,W)\leq 2-\chi(S_{1})$. Since $\partial S_{1}$ is essential in  $S^{\star}$, $2-\chi(S_{1}) \leq 2-\chi(S^{\star}\cap E(K)) \leq 2-\chi(S^{\star})=2g(S^{\star})=2g(K^{\star})=2t(K^{\star})+2\leq 2t(K)+4$. Then $d_{\mathcal{C}(S)}(V,W)\leq 2t(K)+4$. A contradiction. Otherwise, $S^{\star}\cap E(K)$ contains an essential subsurface  $S_{1}$ in $E(K)$. Then by the same argument, $d_{\mathcal{C}(S)}(V,W)\leq 2t(K)+4$. A contradicition.

	%By Claim \ref{claim1}, there is at most one strongly irreducible component in $S^{\star}\cap E(K)$ while others are essential in $E(K)$. If $S^{\star}\cap E(K)$ contains an essential surface in $E(K)$, then . A contradiction. Otherwise, $S^{\star}\cap E(K)$ contains only one strongly irreducible component $F$. By Claim \ref{claim1}, $F$ is either essential or strongly irreducible in $E(K)$. Since $F$ intersects $T^{2}$ nonempty up to isotopy, $F$ and $S$ are not well-separated. By Remark in \cite{Li1}, $S_{t}\cap F\neq \emptyset$ for some $t\in [0,1]$. Then by Lemma \ref{lem2}, $d_{\mathcal{C}(S)}(V,W)\leq 2-\chi(F)$. Since each component of $S^{\star}\cap T^{2}$ is essential $S^{\star}$, $2-\chi(F) \leq 2-\chi(S^{\star}\cap E(K)) \leq 2-\chi(S^{\star})=2g(S^{\star})=2g(K^{\star})=2t(K^{\star})+2\leq 2t(K)+4$. Then $d_{\mathcal{C}(S)}(V,W)\leq 2t(K)+4$. A contradiction.
    %Since none component of $S^{\star}\cap E(K)$ or $S^{\star}\cap C$ is a disk,
    \end{proof}

	So $V^{\star}\cup_{S^{\star}}W^{\star}$ is weakly reducible and irreducible. By \cite{STh}, $V^{\star}\cup_{S^{\star}}W^{\star}$ has an untelescoping $(V_{1}\cup_{S_{1}}W_{1})\cup_{F_{1}}...\cup_{F_{n-1}}(V_{n}\cup_{S_{n}}W_{n})$,  for $n\geq 2$, so that  (1)  $F_{i}$ is essential  in $E(K^{\star})$, for any $1\leq i\leq n-1$; (2) $V_{j}\cup_{S_{j}}W_{j}$ is a strongly irreducible Heegaard splitting,  for any $1\leq j\leq n$,  see Figure 7.
	
	\begin{figure}[htbp]
		\centering
		\includegraphics[width=0.80\textwidth]{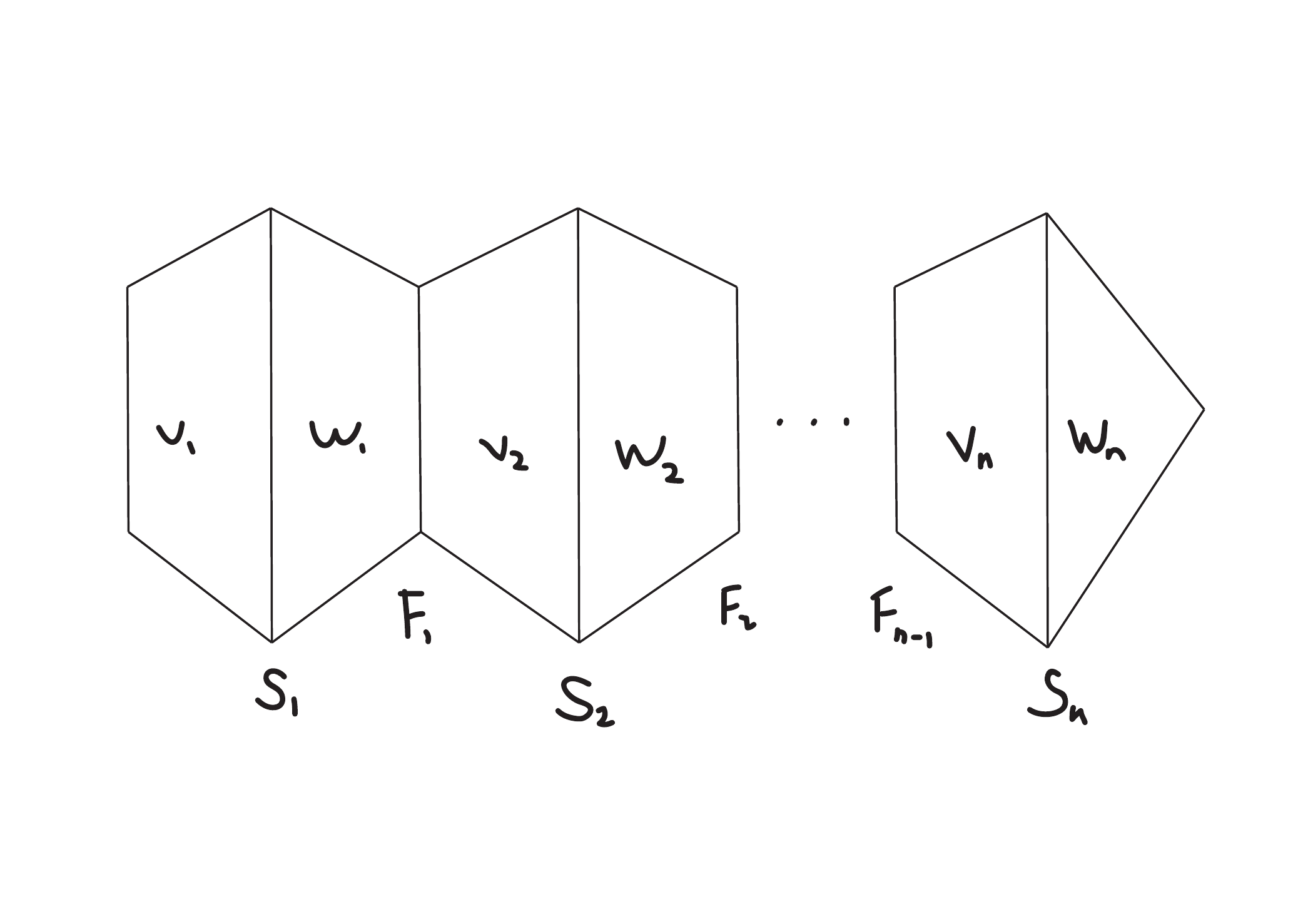}
		\caption{Untelescoping of $V^{\star}\cup_{S^{\star}}W^{\star}$}
	\end{figure}
	
	Since  \[g(S^{\star})=\sum\limits_{i=1}^{n}g(S_{i})-\sum\limits_{j=1}^{n-1}g(F_{j})=\sum\limits_{i=1}^{n-1}(g(S_{i})-g(F_{i}))+g(S_{n})\]
	
	\[=\sum\limits_{i=1}^{k-1}(g(S_{i})-g(F_{i}))+g(S_{k})+\sum\limits_{j=k+1}^{n}(g(S_{j})-g(F_{j-1}))~(2\leq k\leq n-1)\]
	
	\[=g(S_{1})+\sum\limits_{j=2}^{n}(g(S_{j})-g(F_{j-1}))\]
	and
	\[g(S_{i})\geq max~\{g(F_{i}),~g(F_{i-1})\}~(2\leq i\leq n-1),\]
	\[g(S_{1})\geq g(F_{1}),\]
	\[g(S_{n})\geq g(F_{n-1}),\]
	we have
	\[g(S^{\star})\geq g(S_{k})~(1\leq k\leq n).\]

	If  for some $1\leq i\leq n-1$, $F_{i}$ intersects  $T^{2}$  nontrivially up to isotopy,  then each component of $F_{i}\cap E(K)$ is essential in $E(K)$.  Let $F_{1,i}\subset F_{i}\cap E(K)$ be an essential surface in $E(K)$.  Then by Lemma \ref{lem2}, $d_{\mathcal{C}(S)}(V,W)\leq 2-\chi(F_{1,i})\leq 2-\chi(F_{i})=2g(F_{i})\leq 2g(S_{i})\leq 2g(S^{\star})\leq 2t(K)+4$. It contradicts the assumption that $d_{\mathcal{C}(S)}(V,W)\geq 2t(K)+5$. So  for any $1\leq i\leq n-1$, $F_{i}$  is disjoint from $T^{2}$. By the similar argument in the proof of Lemma \ref{lemma1},  for any $1\leq j\leq n$, $S_{j}$  is disjoint from $T^{2}$. Hence $T^{2}$ is disjoint from $(\mathop\cup\limits_{i=1}^{n-1}F_{i})\cup(\mathop\cup\limits_{j=1}^{n}S_{j})$.
	
	%If $S_{i}\cap E(K)$ is nonempty up to isotopy for some $1\leq i\leq n$, by Lemma \ref{lem3}, $d_{\mathcal{C}(S)}(V,W)\leq 2-\chi(S^{\star})=2g(S^{\star})=2g(E(K^{\star}))=2t(K^{\star})+2\leq 2t(K)+4$. A contradiction.
	
	%If $F_{j}\cap E(K)$ is nonempty up to isotopy for some $1\leq j\leq n-1$, then each component of $F_{j}\cap E(K)$ is incompressible in $E(K)$. Similar to Case (a), $d_{\mathcal{C}(S)}(V,W)\leq 2g(F_{j})\leq 2g(S_{j})\leq 2g(S^{\star})\leq 2t(K)+4$. A contradiction.

	 Then $T^{2}$ lies in some $V_{i}$ or $W_{i}$, for some $1\leq i\leq n$. Without  loss of generality,  we assume that $T^{2}$ lies in  $V_{i}$. Since $T^{2}$ is essential in $E(K^{\star})$, $i\geq 2$. It is known that there is no essential closed surface in a compression body or handlebody. So $T^{2}$ is isotopic to  $F_{i-1}$. Therefore $V^{\star}\cup_{S^{\star}}W^{\star}$ is an amalgamation of a Heegaard splitting of $E(K)$ and a Heegaard splitting of $C$ along $T^{2}$.
	 \begin{fact}
	 	$g(C)=2$.
	 \end{fact}
	 \begin{proof}
	  Since $p\geq 2$, $K^{\star}$ runs around the longitude of $\eta(K)$ at least twice. So $C$ is not a torus $I$-bundle.  On one hand, $\partial C$ consists of two tori. Then $g(C)\geq 2$. On the other hand, there is a genus two Heegaard splitting of $C=(T^{2}\times I)\cup_{A}\eta(K)$. Let $J=\overline{\eta(K)-(\partial \eta(K)\times I)}$ and $b$ be a fiber arc in $\overline{C-(T^{2}\times [0,\frac{1}{2}])-J}$ with one endpoint in $T^{2}\times\{\frac{1}{2}\}$ and the other in $\partial J$ and $b\cap int(A)\neq \emptyset$. Then both $V_{1}=(T^{2}\times [0,\frac{1}{2}])\cup \eta(b)\cup J$ and $W_{1}=\overline{C-V_{1}}$ are genus two compression bodies with $\partial_{+}V_{1}=\partial_{+}W_{1}$, see Figure 8. So $V_{1}\cup W_{1}$ is a genus two Heegaard splitting of $C$ and $g(C)\leq 2$. %Then both $H_{1}=\overline{H_{1}^{'}-\eta(b)}$ and $H_{2}=\overline{C-H_{1}}$ are genus two compression bodies with $\partial_{+}H_{1}=\partial_{+}H_{2}$, see Figure 6. So $H_{1}\cup H_{2}$ is a genus two Heegaard splitting of $C$ and $g(C)\leq 2$.  %Let $N_{\star}$ and $N$ be two small closed regular neighborhoods of the two boundary tori $\partial \eta(K^{\star})$ and $\partial N(K)$ in $C$ respectively. Suppose $b$ is an unknotted arc in $C$ connecting $N_{\star}$ and $N$. Let $H_{1}=N_{\star}\cup N(b)\cup N$ and $H_{2}=\overline{C-H_{1}}$. Then $H_{1}$ is a genus two compression body. Since $K^{\star}$ is parallel to $\partial N(K)$, $H_{2}$ is a genus two handlebody. So $H_{1}\cup H_{2}$ is a genus two Heegaard splitting of $C$ and $g(C)\leq 2$.  %a tube surgery (?) shows that $g(C)\leq 2$.
	  Hence $g(C)=2$.
	  \end{proof}
	  \begin{figure}[htbp]
	  	\centering
	  	\includegraphics[width=0.80\textwidth]{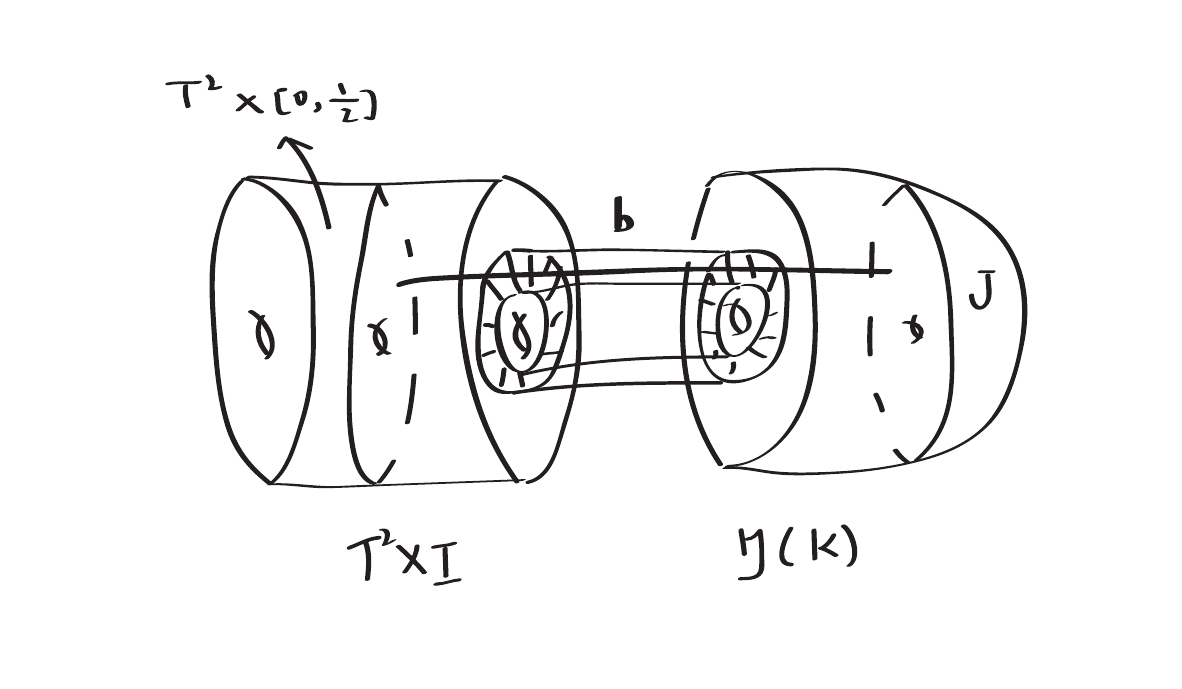}
	  	\caption{A genus two Heegaard splitting of $C$}
	  \end{figure}
	
	  Therefore, $g(K^{\star})=g(S^{\star})\geq g(K)+g(C)-g(T^{2})=g(K)+1$. And $t(K^{\star})\geq t(K)+1$. However,  $t(K^{\star})\leq t(K)+1$. Hence $t(K^{\star})=t(K)+1$.
	
  \subsection{Proof of Proposition 4.1 (2)}

     Recall that $InS$ is the collection of isotopy classes of  boundary slopes of properly embedded essential surfaces in $E(K)$. Then by Lemma \ref{lem3}, $InS$  contains finitely many vertices in $\mathcal {C}(T^{2})$ depending on $K$.  So  there is  a constant  $\mathcal{N}_{1}$ depending on $K$  so that $diam_{\mathcal{C}(T^{2})}(InS)\leq \mathcal{N}_{1}$.
    \begin{claim}\label{claim2}
    	$E(K)$ is not a twisted $I$-bundle over a compact non-orientable surface.
    \end{claim}

    \begin{proof}
    Suppose the conclusion is false. Then  $E(K)$ is a twisted $I$-bundle over a compact non-orientable surface $F$. Then $\chi(\partial E(K))=2\chi(F)$. Since $\partial E(K)$ is a torus, $\chi(\partial E(K))=2\chi(F)=0$. So $F$ is either a Mobius band or Klein bottle.  We say that $F$ is not a Mobius band. For if not,  then $E(K)$ is the twisted I-bundle of a Mobius band, i.e., a solid torus. So $K$ is a trivial knot in $S^{3}$.  A contradiction.  So $F$ is a Klein bottle. Let $\widetilde{F}$ be a double cover of $F$. Then  $\widetilde{F}$ is a torus and $\widetilde{F}\times I$, i.e.,  a torus I-bundle,   is a double covering of $E(K)$.    For any slope  $r \subset \partial E(K)$,  $\widetilde{F}\times I(r,r)$ is a double covering of $E(K)(r)$.
 However,   when $r$ is the meridian of $K$,  $\widetilde{F}\times I(m,m)$ is  a Lens space or $S^{3}$ while $E(K)(m)$ is $S^{3}$.  It means that $S^{3}$ is not simple connected.  A contradiction.  \end{proof}

    Let $\alpha\in InS$ be a boundary slope of an essential surface $Q$ properly embedded in $E(K)$ such that the genus of $Q$ is minimal. Let $g=max~\{g(Q),t(K)+2\}$. By Claim \ref{claim2}, $E(K)$ is not a twisted $I$-bundle over a non-orientable surface.  Since $K$ is nontrivial, $\partial E(K)$ is incompressible.  So $E(K)$ is not a product $I$-bundle over an orientable compact surface.  By Lemma \ref{lem4}, for any properly embedded, strongly irreducible and $\partial$-strongly irreducible,  genus at most $g$ surface $P$  in $E(K)$,  there is a number $K^{'}$ depending only on $g$ so that $d_{\mathcal{C}(T^{2})}(\partial P,\alpha)\leq K^{'}$. Let  $\mathcal{N}=K^{'}+\mathcal{N}_{1}$.

    Let  $V^{\star}\cup_{S^{\star}}W^{\star}$ be a minimal genus Heegaard splitting of $E(K^{\star})$ with $\partial_{-}V^{\star}=\partial E(K^{\star})$.  Since $E(K^{\star})$ is irreducible, $V^{\star}\cup_{S^{\star}}W^{\star}$ is either strongly irreducible or weakly reducible and irreducible.

    \begin{lemma}\label{lemma2}
    $V^{\star}\cup_{S^{\star}}W^{\star}$ is weakly reducible and irreducible.
    \end{lemma}

    \begin{proof}
    Suppose that the conclusion is false. Then $V^{\star}\cup_{S^{\star}}W^{\star}$ is strongly irreducible. Since $T^{2}$ is essential in $E(K^{\star})$,  $T^{2}$ intersects $S^{\star}$ nontrivially up to isotopy. By Claim \ref{claim1}, there is at most one strongly irreducible component in $S^{\star}\cap E(K)$ while others are essential in $E( K)$. Moreover, $\partial S^{\star}\cap E(K)$ are isotopic to the slope $p/q$ in $T^{2}$.
    
      If $S^{\star}\cap E(K)$ contains an essential surface in $E(K)$, then  $p/q\in InS$ and $diam_{\mathcal{C}(T^{2})}(p/q,InS)\leq \mathcal{N}_{1}\leq \mathcal{N}$. A contradiction. Otherwise, $S^{\star}\cap E(K)$ contains only one strongly irreducible component $S_{1}$.  By Lemma \ref{lem5},  either $S_{1}$ is strongly irreducible, $\partial$-irreducible or there is an 
 incompressible and boundary incompressible surface $F$ so that $d_{\mathcal{C}(T^{2})}(\partial S_{1},\partial F)\leq 1.$   For the first case,  $g(S_{1})\leq g(S^{\star}\cap E(K))\leq g(S^{\star})=g(K^{\star})=t(K^{\star})+1\leq t(K)+2\leq g$. Then $d_{\mathcal{C}(T^{2})}(\partial S_{1},\alpha)\leq K^{'}$, i.e., $d_{\mathcal{C}(T^{2})}(p/q,\alpha)\leq K^{'}$. Hence $diam_{\mathcal{C}(T^{2})}(p/q,InS)\leq K^{'}+\mathcal{N}_{1}=\mathcal{N}$. A contradiction.  For the second case, $diam_{\mathcal{C}(T^{2})}(p/q,InS)\leq \mathcal{N}_{1}+1\leq \mathcal {N}$. A contradiction.
    \end{proof}

    So $V^{\star}\cup_{S^{\star}}W^{\star}$ is weakly reducible and irreducible.  By the main result in \cite{STh}, $V^{\star}\cup_{S^{\star}}W^{\star}$ has an untelescoping $(V_{1}\cup_{S_{1}}W_{1})\cup_{F_{1}}...\cup_{F_{n-1}}(V_{n}\cup_{S_{n}}W_{n})$, $n\geq 2$ so that (1) for any $1\leq i\leq n-1$, $F_{i}$ is incompressible in $E(K^{\star})$ and not parallel to $\partial E(K^{\star})$; (2) for any $1\leq i\leq n$, $V_{i}\cup_{S_{i}}W_{i}$ is a strongly irreducible Heegaard splitting. We say that $T^{2}$ is disjoint from $(\mathop\cup\limits_{i=1}^{n-1}F_{i})\cup(\mathop\cup\limits_{j=1}^{n}S_{j})$. For if not, (1)  either $F_{i}\cap T^{2}\neq \emptyset$  up to isotopy for some $1\leq i\leq n-1$.  Then each component of $F_{i}\cap E(K)$ is essential in $E(K)$. Note that  $\partial F_{i}\cap E(K)$ are isotopic to  $p/q$ in  $T^{2}$. Then $diam_{\mathcal{C}(T^{2})}(p/q,InS)\leq \mathcal{N}_{1}\leq \mathcal{N}$. A contradiction. Or, (2)  $S_{i}\cap T^{2}\neq \emptyset$. By the similar argument in the proof of Lemma \ref{lemma2},   $diam_{\mathcal{C}(T^{2})}(p/q,InS)\leq \mathcal{N}_{1}\leq \mathcal{N}$.  A contradiction.
        
     Therefore $V^{\star}\cup_{S^{\star}}W^{\star}$ is an amalgamation of a Heegaard splitting of $E(K)$ and a Heegaard splitting of $C$ along $T^{2}$. So  $g(K^{\star})=g(S^{\star})\geq g(K)+g(C)-g(T^{2})=g(K)+1$ and $t(K^{\star})\geq t(K)+1$. However,  it is known that $t(K^{\star})\leq t(K)+1$. So $t(K^{\star})=t(K)+1$.
    %Similar to subsection 3.1, we have $t(K^{\star})=t(K)+1$.

    %If $d_{\mathcal{C}(T)}(p/q,\mathcal{I})\geq K^{'}$, then $g(E(K^{\star}))=g(E(K))+g(C)-g(T)=g(E(K))+1$.

    %We choose a gluing map such that the core curve of the annulus is far from the finite set $\mathcal{I}$. Then $T^{2}$ is disjoint from $S_{i}$ and $F_{j}$. Li       			
\end{proof}

\section{The proof of Corollary \ref{cor1}}
Let $K$ and $K^{\star}$ be the same as in Theorem \ref{thm1} (2).  For any  unstabilized Heegaard splitting $V^{\circ}\cup_{S^{\circ}}W^{\circ}$ of $E(K^{\star})$, 
since $E(K^{\star})$ is irreducible, it is either strongly irreducible or
weakly reducible and irreducible. So we divide it into two cases: (1) $V^{\circ}\cup_{S^{\circ}}W^{\circ}$ is strongly irreducible; (2)  $V^{\circ}\cup_{S^{\circ}}W^{\circ}$ is weakly reducible and irreducible.  We firstly prove Corollary \ref{cor1} for the strongly irreducible case.

 %Since $E(K^{\star})$ contains an essential annulus $A$, by the same argument in the proof of Theorem \ref{thm1},  each curve of  $S_{1}\cap A$ is essential in both
%$S_{1}$ and $A$.  Then one component of  $\overline{S_{1}-A}$ is strongly irreducible  while the others are incompressible in their corresponding components  of $E(K^{\star})-A$.
Since $A$ is an essential annulus in $E(K^{\star})$,  by Schultens lemma \cite{Sch},  each curve of  $S^{\circ}\cap A$ is essential in both
$S^{\circ}$ and $A$.
Then by the same argument in the proof of Claim \ref{claim5.1}, one subsurface of  $\overline{S^{\circ}-A}$ is strongly irreducible  while the others are incompressible in their corresponding components  of $E(K^{\star})-A$.
Moreover, each subsurface of $\overline{S^{\circ}-A}$ has $p/q$ slopes as its boundary curves.    By the condition that
\begin{equation*}
	diam_{\mathcal{C}(T^{2})}(p/q,InS)>\mathcal{N},
	\end{equation*}
$S^{\circ}\cap E(K)$ is connected and strongly irreducible  while each component of $S^{\circ}\cap J$ is essential and  an annulus. %It is
%not hard to see that  any two of these annuli are parallel.  So $S_{1}\cap J$ is a collection of nested annuli in $J$,  denoted by $\{A_{1},..,A_{n}\}$ for $n\in N^{+}$.
So $S^{\circ}\cap J$ is a collection of nested annuli in $J$. By the similar argument of Case 1 of  Theorem \ref{theorem0}  in Section 3, we  do a Dehn surgery on $V^{\circ}\cup_{S^{\circ}}W^{\circ}$ and obtain a Heegaard splitting $V\cup_{S} W$ of $E(K)$.  In reverse,  $V^{\circ}\cup_{S^{\circ}}W^{\circ}$ is also a Dehn surgery of  $V\cup_{S} W$.

Otherwise, $V^{\circ}\cup_{S^{\circ}}W^{\circ}$ is weakly reducible and irreducible. Then it has an untelescoping, says $V^{\circ}\cup_{S^{\circ}}W^{\circ}=(V_{1}\cup_{S_{1}}W_{1})\cup_{F_{1}}...\cup_{F_{n-1}}(V_{n}\cup_{S_{n}}W_{n})$, for $n\geq 2$,  where  $F_{i}~(1\leq i\leq n-1)$ is incompressible in $E(K^{\star})$. By the same argument as above,  $A$ is disjoint from  $\mathop\cup\limits_{i=1}^{n-1}F_{i}$.  So $A$ is contained in $V_{i}\cup_{S_{i}}W_{i}$, for some $1\leq i\leq n$.  Then by the same argument as the strongly irreducible case, we do a Dehn surgery on $V_{i}\cup_{S_{i}}W_{i}$ so that it is changed into $V_{1,i}\cup_{S_{1,i}}W_{1,i}$.  Moreover, $(V_{1}\cup_{S_{1}}W_{1})\cup_{F_{1}}...\cup_{F_{i-1}} (V_{1,i}\cup_{S_{1,i}} W_{1,i})\cup_{F_{i}}...\cup_{F_{n-1}}(V_{n}\cup_{S_{n}}W_{n})$  is a Heegaard splitting of $E(K)$.   In reverse, $V^{\circ}\cup_{S^{\circ}}W^{\circ}=(V_{1}\cup_{S_{1}}W_{1})\cup_{F_{1}}...\cup_{F_{n-1}}(V_{n}\cup_{S_{n}}W_{n})$ is also a Dehn surgery of $(V_{1}\cup_{S_{1}}W_{1})\cup_{F_{1}}...\cup_{F_{i-1}} (V_{1,i}\cup_{S_{1,i}} W_{1,i})\cup_{F_{i}}...\cup_{F_{n-1}}(V_{n}\cup_{S_{n}}W_{n})$.

\bibliographystyle{amsplain}

\end{document}